\newtheorem{theorem}{Theorem}[section]
\newtheorem{lemma}[theorem]{Lemma}
\newtheorem{prop}[theorem]{Proposition}
\theoremstyle{definition}
\newtheorem{defn}[theorem]{Definition}
\newtheorem{remark}[theorem]{Remark}
\newtheorem{exam}[theorem]{Example}
\newcommand{\nc}{\newcommand}
\newcommand{\delete}[1]{}
\nc{\tred}[1]{\textcolor{red}{#1}}
\nc{\tblue}[1]{\textcolor{blue}{#1}} \nc{\tgreen}[1]{\textcolor{green}{#1}} \nc{\tpurple}[1]{\textcolor{purple}{#1}} \nc{\btred}[1]{\textcolor{red}{\bf #1}} \nc{\btblue}[1]{\textcolor{blue}{\bf #1}} \nc{\btgreen}[1]{\textcolor{green}{\bf #1}} \nc{\btpurple}[1]{\textcolor{purple}{\bf #1}}
\newcommand{\efootnote}[1]{}
\nc{\mlabel}[1]{\label{#1}}  
\nc{\mcite}[2][]{\cite[#1]{#2}}  
\nc{\mref}[1]{\ref{#1}}  
\nc{\mbibitem}[1]{\bibitem{#1}} 
\nc{\mlabel}[1]{\label{#1}  
{\hfill \hspace{1cm}{\bf{{\ }\hfill(#1)}}}}
\nc{\mcite}[1]{\cite{#1}}  
\nc{\mref}[1]{\ref{#1}{{\bf{{\ }(#1)}}}}  
\nc{\mbibitem}[1]{\bibitem[\bf #1]{#1}} 
\renewcommand\geq{\geqslant}
\renewcommand\leq{\leqslant}
\renewcommand\bar[1]{\overline{#1}}
\nc{\nz}{\varepsilon}
\nc{\Id}{\mathrm{Id}}
\nc{\map}[2]{{#2}^{#1}}
\nc{\gp}{B}
\nc{\Irr}{\mathrm{Irr}}
\nc{\vx}{\sigma} \nc{\vy}{\tau} \nc{\dvx}{\sigma^{(1)}} \nc{\dvy}{\tau^{(1)}} \nc{\done}{\vep} \nc{\mcitep}[1]{\mcite{#1}} \nc{\wt}{\mathrm{wt}} \nc{\bre}[1]{|#1|} \nc{\mapmonoid}{\frakM} \nc{\disjoint}{\frakM'}
\nc{\ncpoly}[1]{\langle #1\rangle}  
\nc{\mapm}[1]{\lfloor\!|{#1}|\!\rfloor}
\nc{\diff}[1]{{}^\NC\{ #1 \}} \nc{\disj}[1]{\{{#1}\}'} \nc{\mdisj}[1]{\frakM'(#1)} \nc{\brho}{\bar{\rho}} \nc{\om}{\bar{\frakm}} \nc{\frakn}{\mathfrak n} \nc{\ddeg}[1]{^{(#1)}} \nc{\opset}{X} \nc{\genset}{{Z}} \nc{\NC}{\mathrm{{NC}}} \nc{\leaf}{\mathrm{leaf}} \nc{\twig}{\mathrm{twig}} \nc{\fe}{\mathrm{fl}} \nc{\munderline}[1]{#1} \nc{\bo}{o} \nc{\dep}{\mathrm{depth}} \nc{\ofe}{\mathrm{ofl}} \nc{\dfe}{\mathrm{dfe}} \nc{\fex}{\mathrm{fex}} \nc{\dl}{\mathrm{dlex}} \nc{\db}{\mathrm{db}} \nc{\lex}{\mathrm{lex}} \nc{\clex}{\mathrm{clex}} \nc{\dgp}{\mathrm{dgp}} \nc{\dgx}{\mathrm{dgx}} \nc{\br}{\mathrm{br}} \nc{\obd}{\mathrm{odb}} \nc{\ob}{\mathrm{ob}}
\nc{\pie}{\mathrm{PIE}}
\nc{\rbo}{\mathrm{RBO}}
\nc{\supp}{\mathcal{S}}
\nc{\nul}{\mathcal{Z}}
\nc{\td}{\mathrm{st}}
\nc{\bin}[2]{ (_{\stackrel{\scs{#1}}{\scs{#2}}})}  
\nc{\binc}[2]{ \left (\!\! \begin{array}{c} \scs{#1}\\
    \scs{#2} \end{array}\!\! \right )}  
\nc{\bincc}[2]{  \left ( {\scs{#1} \atop
    \vspace{-1cm}\scs{#2}} \right )}  
\nc{\bs}{\bar{S}} \nc{\cosum}{\sqsubset} \nc{\la}{\longrightarrow} \nc{\rar}{\rightarrow} \nc{\dar}{\downarrow} \nc{\dprod}{**} \nc{\dap}[1]{\downarrow \rlap{$\scriptstyle{#1}$}} \nc{\md}[1]{\bar{#1}} \nc{\uap}[1]{\uparrow \rlap{$\scriptstyle{#1}$}} \nc{\defeq}{\stackrel{\rm def}{=}} \nc{\disp}[1]{\displaystyle{#1}} \nc{\dotcup}{\ \displaystyle{\bigcup^\bullet}\ } \nc{\gzeta}{\bar{\zeta}} \nc{\hcm}{\ \hat{,}\ } \nc{\hts}{\hat{\otimes}} \nc{\barot}{{\otimes}} \nc{\free}[1]{\bar{#1}} \nc{\uni}[1]{\tilde{#1}} \nc{\hcirc}{\hat{\circ}} \nc{\leng}{\ell} \nc{\lleft}{[} \nc{\lright}{]} \nc{\lc}{\lfloor} \nc{\rc}{\rfloor}
\nc{\lb}{[} 
\nc{\rb}{]} 
\nc{\curlyl}{\left \{ \begin{array}{c} {} \\ {} \end{array}
    \right.  \!\!\!\!\!\!\!}
\nc{\curlyr}{ \!\!\!\!\!\!\!
    \left. \begin{array}{c} {} \\ {} \end{array}
    \right \} }
\nc{\longmid}{\left | \begin{array}{c} {} \\ {} \end{array}
    \right. \!\!\!\!\!\!\!}
\nc{\onetree}{\bullet} \nc{\ora}[1]{\stackrel{#1}{\rar}}
\nc{\ola}[1]{\stackrel{#1}{\la}}
\nc{\ot}{\otimes} \nc{\mot}{{{\boxtimes\,}}} \nc{\otm}{\overline{\boxtimes}} \nc{\sprod}{\bullet} \nc{\scs}[1]{\scriptstyle{#1}} \nc{\mrm}[1]{{\rm #1}} \nc{\msum}{\sum\limits}
\nc{\margin}[1]{\marginpar{\rm #1}}   
\nc{\dirlim}{\displaystyle{\lim_{\longrightarrow}}\,} \nc{\invlim}{\displaystyle{\lim_{\longleftarrow}}\,} \nc{\mvp}{\vspace{0.3cm}} \nc{\tk}{^{(k)}} \nc{\tp}{^\prime} \nc{\ttp}{^{\prime\prime}} \nc{\svp}{\vspace{2cm}} \nc{\vp}{\vspace{8cm}} \nc{\proofbegin}{\noindent{\bf Proof: }}
\nc{\proofend}{$\blacksquare$ \vspace{0.3cm}}
\nc{\modg}[1]{\!<\!\!{#1}\!\!>}
\nc{\intg}[1]{F_C(#1)} \nc{\lmodg}{\!<\!\!} \nc{\rmodg}{\!\!>\!} \nc{\cpi}{\widehat{\Pi}}
\nc{\sha}{{\mbox{\cyr X}}}  
\nc{\shap}{{\mbox{\cyrs X}}} 
\nc{\shpr}{\diamond}    
\nc{\shp}{\ast} \nc{\shplus}{\shpr^+}
\nc{\shprc}{\shpr_c}    
\nc{\msh}{\ast} \nc{\zprod}{m_0} \nc{\oprod}{m_1} \nc{\vep}{\varepsilon} \nc{\labs}{\mid\!} \nc{\rabs}{\!\mid}
\nc{\astarrow}{\overset{\raisebox{-3pt}{$\ast$}}{\rightarrow}}
\nc{\sym}{\mrm{Sym}}
\nc{\qsym}{\mrm{QSym}}
\nc{\syms}{symmetric functions\xspace}
\nc{\eqsym}{extended quasisymmetric function\xspace}
\nc{\eqsyms}{extended quasisymmetric functions\xspace}
\nc{\Eqsyms}{Extended Quasisymmetric functions\xspace}
\nc{\Esyms}{Extended symmetric functions\xspace}
\nc{\EQSYM}{\mrm{EQSym}}
\nc{\sgqsym}{quasisymmetric function with semigroup exponents\xspace}
\nc{\sgqsyms}{quasisymmetric functions with semigroup exponents\xspace}
\nc{\Sgqsyms}{Quasisymmetric functions with semigroup exponents\xspace}
\nc{\SGQSYM}{\mrm{SGQSYM}}
\nc{\emzv}{extended multiple zeta value}
\nc{\emzvs}{extended multiple zeta values}
\nc{\sgfps}{formal power series with semigroup exponent\xspace}
\nc{\nsymg}{\mathrm{NSym}_\gp}
\nc{\parr}{\rm Par}
\nc{\wpar}{\rm WPar}
\nc{\dth}{d} \nc{\mmbox}[1]{\mbox{\ #1\ }} \nc{\fp}{\mrm{FP}} \nc{\rchar}{\mrm{char}} \nc{\Fil}{\mrm{Fil}} \nc{\Mor}{Mor\xspace} \nc{\gmzvs}{gMZV\xspace} \nc{\gmzv}{gMZV\xspace} \nc{\mzv}{MZV\xspace} \nc{\mzvs}{MZVs\xspace} \nc{\Hom}{\mrm{Hom}} \nc{\id}{\mrm{id}} \nc{\im}{\mrm{im}} \nc{\incl}{\mrm{incl}}  \nc{\mchar}{\rm char}
\nc{\Alg}{\mathbf{Alg}} \nc{\Bax}{\mathbf{Bax}} \nc{\bff}{\mathbf f} \nc{\bfk}{{\bf k}} \nc{\bfone}{{\bf 1}} \nc{\bfx}{\mathbf x} \nc{\bfy}{\mathbf y}
\nc{\base}[1]{\bfone^{\otimes ({#1}+1)}} 
\nc{\Cat}{\mathbf{Cat}} \delete{}
\nc{\detail}{\marginpar{\bf More detail}
    \noindent{\bf Need more detail!}
    \svp}
\nc{\Int}{\mathbf{Int}} \nc{\Mon}{\mathbf{Mon}}
\nc{\rbtm}{{shuffle }} \nc{\rbto}{{Rota-Baxter }} \nc{\remarks}{\noindent{\bf Remarks: }} \nc{\Rings}{\mathbf{Rings}} \nc{\Sets}{\mathbf{Sets}}
\nc{\balpha}{\mathbf{\alpha}}
\nc{\BA}{{\mathbb A}} \nc{\CC}{{\mathbb C}} \nc{\DD}{{\mathbb D}} \nc{\EE}{{\mathbb E}} \nc{\FF}{{\mathbb F}} \nc{\GG}{{\mathbb G}} \nc{\HH}{{\mathbb H}} \nc{\LL}{{\mathbb L}} \nc{\NN}{{\mathbb N}} \nc{\KK}{{\mathbb K}} \nc{\PP}{{\mathbb P}} \nc{\QQ}{{\mathbb Q}} \nc{\RR}{{\mathbb R}} \nc{\TT}{{\mathbb T}} \nc{\VV}{{\mathbb V}} \nc{\ZZ}{{\mathbb Z}}
\nc{\cala}{{\mathcal A}} \nc{\calc}{{\mathcal C}} \nc{\cald}{{\mathcal D}} \nc{\cale}{{\mathcal E}} \nc{\calf}{{\mathcal F}} \nc{\calg}{{\mathcal G}} \nc{\calh}{{\mathcal H}} \nc{\cali}{{\mathcal I}} \nc{\call}{{\mathcal L}} \nc{\calm}{{\mathcal M}} \nc{\caln}{{\mathcal N}} \nc{\calo}{{\mathcal O}} \nc{\calp}{{\mathcal P}} \nc{\calr}{{\mathcal R}} \nc{\cals}{{\mathcal S}} \nc{\calt}{{\mathcal T}} \nc{\calw}{{\mathcal W}} \nc{\calk}{{\mathcal K}} \nc{\calx}{{\mathcal X}}
\nc{\calz}{{\mathcal Z}}
 \nc{\CA}{\mathcal{A}}
\nc{\fraka}{{\mathfrak a}} \nc{\frakA}{{\mathfrak A}} \nc{\frakb}{{\mathfrak b}} \nc{\frakB}{{\mathfrak B}}
\nc{\frakc}{{\mathfrak c}}  \nc{\frakD}{{\mathfrak D}}
\nc{\frakH}{{\mathfrak H}}
\nc{\frakh}{{\mathfrak h}} \nc{\frakM}{{\mathfrak M}}
\nc{\frakO}{{\mathfrak O}}
\nc{\frakE}{{\mathfrak E}}
\nc{\bfrakM}{\overline{\frakM}} \nc{\frakm}{{\mathfrak m}} \nc{\frakP}{{\mathfrak P}} \nc{\frakN}{{\mathfrak N}} \nc{\frakp}{{\mathfrak p}} \nc{\frakS}{{\mathfrak S}}
\nc{\frakk}{{\mathfrak k}}
\nc{\frakx}{{\mathfrak x}}
\nc{\frakl}{{\mathfrak l}} \nc{\ox}{\bar{\frakx}} \nc{\frakX}{{\mathfrak X}} \nc{\fraky}{{\mathfrak y}} \nc\dop{\delta}
\nc{\Reduce}{{\rm Red}}
\font\cyr=wncyr10 \font\cyrs=wncyr7
\nc{\redt}[1]{\textcolor{red}{#1}}
\nc{\li}[1]{\textcolor{red}{\tt Li:#1}}
\nc{\yu}[1]{\textcolor{blue}{\tt Yu:#1}}
\begin{document}
\title{Characteristics of Rota-Baxter Algebras}

\author{Li Guo}
\address{
Department of Mathematics and Computer Science, Rutgers University, Newark, NJ 07102, USA}
\email{liguo@rutgers.edu}

\author{Houyi Yu}
\address{School  of Mathematics and Statistics, Southwest University, Chongqing, China}
\email{yuhouyi@swu.edu.cn}

\hyphenpenalty=8000
\date{\today}

\begin{abstract}
The characteristic is a simple yet important invariant of an algebra. In this paper, we study the characteristic of a Rota-Baxter algebra, called the Rota-Baxter characteristic. We introduce an invariant, called the ascent set, of a Rota-Baxter characteristic. By studying its properties, we classify Rota-Baxter characteristics in the homogenous case and relate Rota-Baxter characteristics in general to the homogeneous case through initial ideals. We show that
the Rota-Baxter quotients of Rota-Baxter characteristics have the same underlying sets as those in the homogeneous case.
We also give a more detailed study of Rota-Baxter characteristics with special base rings. In particular, we determine the prime characteristics of Rota-Baxter rings.
\end{abstract}

\subjclass[2010]{13C05, 13E05, 16W99}

\keywords{Rota-Baxter algebra, Rota-Baxter characteristic, initial object, initial ideal, prime ideal}

\maketitle

\tableofcontents

\hyphenpenalty=8000 \setcounter{section}{0}


\allowdisplaybreaks

\section{Introduction}\label{Introduction}

A Rota-Baxter algebra is an associative algebra together
with a linear endomorphism that is an algebraic analogue of the integral operator.
This concept has it origin from a work of G. Baxter~\cite{Baxter1960} in probability theory.
In the late 1960s, Rota \cite{Ro1} studied the subject from
an algebraic and combinatorial perspective and suggested that they are closely related to
hypergeometric functions, incidence algebras and symmetric functions \cite{Ro2,Ro1998}.
Since then, these algebras have been investigated by mathematicians and mathematical physicists with various motivations. For example, a Rota-Baxter operator on a Lie algebra is closely related to the operator form of the classical Yang-Baxter equation. Here the Baxter is the physicist R. Baxter~\cite{Bai,STS}. In recent years Rota-Baxter operators have found applications to many areas, such as number theory \cite{guozhang2008}, combinatorics \cite{GG,Ro1,Ro2}, operads \cite{AL,Baibguoni2013} and
quantum field theory \cite{CK1998}. See \cite{guo2012,guo2009} and
the references therein for further details.

For the theoretic study of this important algebraic structure, it is useful to generalize the study of characteristics of algebras to Rota-Baxter algebras. The characteristic of a (unitary) ring $R$ is (the nonnegative generator of) the kernel of the structure map $\ZZ\to R$ sending $1$ to the identity element of $R$. More generally, the characteristic of an algebra $R$ over a commutative ring $\bfk$ is the kernel of the structure map $\bfk\to R$. To generalize this concept to the context of Rota-Baxter algebras, we note that the structure map $\bfk\to R$ comes from the fact that $\bfk$ is the initial object in the category of $\bfk$-algebras, as the free $\bfk$-algebra on the empty set. Then to study the characteristics of Rota-Baxter $\bfk$-algebras, we consider the initial object in the category of Rota-Baxter $\bfk$-algebras, as the free Rota-Baxter algebra on the empty set. Thus every Rota-Baxter $\bfk$-algebra comes with a (unique) structure map from the initial object to this Rota-Baxter algebra. Then the kernel of the structure map should be the characteristic of the Rota-Baxter algebra.

In this paper, we study the characteristics of Rota-Baxter algebras. More precisely, we study the Rota-Baxter ideals, and their corresponding quotients, of the initial Rota-Baxter $\bfk$-algebra. In particular, we study the Rota-Baxter ideals of the initial Rota-Baxter ring. The initial Rota-Baxter algebra is a generalization of the divided power algebra, and the polynomial algebra $\bfk[x]$ when $\bfk$ is taken to contain $\QQ$. So some our results are naturally related to results of these algebras.

The construction of the initial object will be reviewed at the beginning of Section~\ref{Rbisection} and will be applied to give the concept of Rota-Baxter characteristics. In Section \ref{Rbi2section3}, we first classify homogeneous Rota-Baxter characteristics and their quotients (Theorem~\ref{corhomoideal}). We then show that the Rota-Baxter quotients of general Rota-Baxter characteristics have the same underlying sets as those in the homogeneous case~(Theorem \ref{mainthmdcomq}).
Finally specializing to the case of $\bfk=\ZZ$ in Section~\ref{k=z3},
we give the structures of Rota-Baxter characteristics of Rota-Baxter rings, and determine the prime Rota-Baxter characteristics (Theorem \ref{primesirb}).

\section{Characteristics of Rota-Baxter algebras}\label{Rbisection}

\noindent
{\bf Notations.} Unless otherwise specified, an algebra in this paper is assumed to be unitary associative defined over a unitary commutative ring $\bfk$.
Let $\NN$ and $\PP$ denote the set of nonnegative and positive integers respectively. For $n\in \NN$, denote $[n]:=\{1,\cdots,n\}$. For notational convenience, we also denote $[\infty]=\PP$. Let $\ZZ_n$ denote the set of integers modulo $n$.

\smallskip

Before giving the definition of the characteristic of a Rota-Baxter algebra, we first provide some background and preliminary results on Rota-Baxter algebras. See~\mcite{EG,guo2012,guokeigher20001} for details.

Let $\lambda\in \bfk$ be given. A {\bf Rota-Baxter $\bfk$-algebra} of weight $\lambda$ is a
$\bfk$-algebra $R$ paired with a linear operator $P$ on $R$ that satisfies the identity
\begin{equation}\label{rtequ}
P(x)P(y)=P(xP(y))+P(P(x)y)+\lambda P(xy)
\end{equation}
for all $x,y\in R$. When $\bfk$ is $\ZZ$, then the pair $(R,P)$ is called a {\bf Rota-Baxter ring}.

A {\bf Rota-Baxter ideal} of a Rota-Baxter algebra $(R,P)$ is an ideal $I$ of $R$ such that $P(I)\subseteq I$. Then we denote $I\leq R$.
The concepts of Rota-Baxter subalgebras, quotient Rota-Baxter algebras and homomorphisms
of Rota-Baxter algebras can be similarly defined.

We recall that the {\bf free Rota-Baxter algebra on a set $X$} is a Rota-Baxter algebra $(F_{RB}(X),P_X)$ together with a set map $i_X:X\to F_{RB}(X)$ characterized by the universal property that, for any Rota-Baxter algebra $(R,P)$ and set map $f:X\to R$, there is a unique Rota-Baxter algebra homomorphism $\tilde{f}:F_{RB}(X)\to R$ such that $i_X\circ \tilde{f}=f$.
The free Rota-Baxter algebra on the empty set is also the free Rota-Baxter $\mathbf{k}$-algebra $F_{RB}(\bfk)$ on the $\bfk$-algebra $\mathbf{k}$~\mcite{guokeigher20001}, characterized by the universal property that, for any Rota-Baxter algebra $(R,P)$ and $\bfk$-algebra homomorphism $f:\bfk\to R$, there is a unique Rota-Baxter algebra homomorphism $\tilde{f}:F_{RB}(\bfk)\to R$ such that $i_\bfk\circ \tilde{f}=f$, where $i_\bfk$ is the structure map $i_\bfk:\bfk\to F_{RB}(\bfk)$. Since there is only one $\bfk$-algebra homomorphism $\bfk\to R$, this universal property shows that $F_{RB}(\bfk)$ is the initial object in the category of Rota-Baxter $\bfk$-algebras.

We refer the reader to~\cite{guokeigher20001}, as well as~\cite{C1972,Ro1}, for the general constructions of free Rota-Baxter algebras, but will focus on a simple construction of $F_{RB}(\bfk)$ following~\cite{AGKO}, where it is denoted by $\sha_\bfk (\bfk)$.
This free  Rota-Baxter $\mathbf{k}$-algebra not only provides the simplest example of free Rota-Baxter  algebras but also establishes the connection between Rota-Baxter algebra and some well-known concepts such as divided powers and
Stirling numbers~\mcite{AGKO,Gust}.
As a $\bfk$-module, $\sha_{\mathbf{k} }(\mathbf{k})$ is given by the free $\bfk$-module
\begin{align*}
\sha_{\mathbf{k} }(\mathbf{k})=\bigoplus_{m=0}^{\infty}\mathbf{k} \fraka_{m}
\end{align*}
on the basis $\left\{\fraka_m\,|\, m\geq 0\right\}$.
For a given $\lambda\in\bfk$, the product $\diamond=\diamond_\lambda$ on $\sha_{\mathbf{k}}(\mathbf{k})$ is defined by
\begin{align}\label{productformula1}
\fraka_{m}\diamond_{\lambda}\fraka_{n}
=\sum_{i=0}^{{\rm min}(m,n)}\binom{m+n-i}{m}\binom{m}{i}\lambda^i\fraka_{m+n-i}, \qquad m,n\in \mathbb{N}.
\end{align}
Thus when $\lambda=0$, $\sha_\bfk(\bfk)$ is the divided power algebra.
Note that $\diamond$ is an extension of the product on $\bfk$ viewed as $\bfk \fraka_0$. Thus there should no confusion if the notation $\diamond$ is suppressed, as we often do.
Define the $\bfk$-linear operator $P=P_{\mathbf{k}}$ on $\sha_{\mathbf{k}}(\mathbf{k})$  by assigning
$P_{\mathbf{k}}(\fraka_m)=\fraka_{m+1}$, $m\geq0$
and extending by additivity.

By~\mcite{guokeigher20001}, when $\bfk$ contains $\QQ$ and $\lambda=0$, we have $\sha_\bfk(\bfk)\cong\bfk[x]$ as a $\bfk$-algebra.

\begin{theorem}\label{freerbak}
The pair $(\sha_{\bfk}(\bfk),P_\bfk)$ is the initial object in the category of Rota-Baxter $\bfk$-algebras.
More precisely, for any Rota-Baxter $\bfk$-algebra $(R, P)$, there is a unique
Rota-Baxter $\bfk$-algebra homomorphism $\varphi: (\sha_{\bfk}(\bfk), P_{\bfk})\rightarrow(R, P)$.
\end{theorem}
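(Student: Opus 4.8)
The strategy is to observe that $(\sha_\bfk(\bfk),P_\bfk)$ is generated as a Rota-Baxter $\bfk$-algebra by its unit and to use this both to force uniqueness and to guide the construction of $\varphi$. From $P_\bfk(\fraka_m)=\fraka_{m+1}$ we get $\fraka_m=P_\bfk^m(\fraka_0)$ for all $m\in\NN$, and setting $m=0$ in~\eqref{productformula1} shows $\fraka_0$ is the identity of $\sha_\bfk(\bfk)$. Hence any morphism $\varphi\colon(\sha_\bfk(\bfk),P_\bfk)\to(R,P)$ of Rota-Baxter $\bfk$-algebras satisfies $\varphi(\fraka_0)=1_R$ and $\varphi(\fraka_m)=\varphi(P_\bfk^m(\fraka_0))=P^m(\varphi(\fraka_0))=P^m(1_R)$; since $\{\fraka_m\}_{m\in\NN}$ is a $\bfk$-basis of $\sha_\bfk(\bfk)$, this determines $\varphi$, giving uniqueness.

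For existence I would define $\varphi\colon\sha_\bfk(\bfk)\to R$ by $\varphi(\fraka_m):=P^m(1_R)$, extended $\bfk$-linearly; this is well defined since the $\fraka_m$ form a $\bfk$-basis. It is $\bfk$-linear and unital ($\varphi(\fraka_0)=1_R$), hence will be a $\bfk$-algebra homomorphism once shown multiplicative, and it intertwines the operators: $\varphi(P_\bfk(\fraka_m))=\varphi(\fraka_{m+1})=P^{m+1}(1_R)=P(\varphi(\fraka_m))$. By bilinearity, multiplicativity of $\varphi$ amounts, in view of~\eqref{productformula1}, to the identity
\begin{equation}
P^m(1_R)\,P^n(1_R)=\sum_{i=0}^{\min(m,n)}\binom{m+n-i}{m}\binom{m}{i}\lambda^i\,P^{m+n-i}(1_R)\quad\text{in }R,\qquad m,n\in\NN.
\tag{$\ast$}
\end{equation}

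I would prove $(\ast)$ by induction on $m+n$. If $m=0$ or $n=0$, both sides equal $P^{\max(m,n)}(1_R)$. For $m,n\geq 1$, apply the Rota-Baxter axiom~\eqref{rtequ} with $x=P^{m-1}(1_R)$ and $y=P^{n-1}(1_R)$:
\[
P^m(1_R)P^n(1_R)=P\bigl(P^{m-1}(1_R)P^n(1_R)\bigr)+P\bigl(P^m(1_R)P^{n-1}(1_R)\bigr)+\lambda P\bigl(P^{m-1}(1_R)P^{n-1}(1_R)\bigr).
\]
Each inner product has total degree strictly less than $m+n$, so the induction hypothesis rewrites it via $(\ast)$; applying $P$ shifts all indices up by one, and matching the coefficient of $P^{m+n-j}(1_R)$ reduces $(\ast)$ to
\[
\binom{m+n-1-j}{m-1}\binom{m-1}{j}+\binom{m+n-1-j}{m}\binom{m}{j}+\binom{m+n-1-j}{m-1}\binom{m-1}{j-1}=\binom{m+n-j}{m}\binom{m}{j},
\]
which follows from Pascal's rule applied twice (first to the two terms carrying $\binom{m+n-1-j}{m-1}$, then to the outcome together with the middle term). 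The main obstacle is precisely this bookkeeping: keeping the three sums consistently indexed after the shift by $P$ and collecting coefficients correctly; this is in essence the computation that verifies the well-definedness of the product~\eqref{productformula1}, so it may alternatively be quoted from~\cite{AGKO}. Granting $(\ast)$, $\varphi$ is the required morphism, and together with the uniqueness from the first paragraph this proves the theorem.
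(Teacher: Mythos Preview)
Your proof is correct but proceeds along a different route than the paper's. The paper does not verify the universal property of $(\sha_\bfk(\bfk),P_\bfk)$ directly; instead it invokes the general construction of free Rota-Baxter algebras $F_{RB}(A)=\sha^{\mathrm{NC}}_\bfk(A)$ from~\cite{EG,guo2012}, specializes to $A=\bfk$ where the basis of Rota-Baxter words collapses to $\{u_r=\lc\cdots\lc 1\rc\cdots\rc : r\geq 0\}$, and then observes that the unique Rota-Baxter homomorphism $\tilde f:F_{RB}(\bfk)\to\sha_\bfk(\bfk)$ extending the structure map sends $u_r\mapsto\fraka_r$ bijectively on bases. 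Thus $\sha_\bfk(\bfk)\cong F_{RB}(\bfk)$, and the latter is already known to be initial.

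Your argument is more self-contained: you bypass the general free construction entirely and instead show directly that $\varphi(\fraka_m):=P^m(1_R)$ is multiplicative, which amounts to the identity $(\ast)$ proved by induction via the Rota-Baxter relation and a Pascal-type binomial identity. This is essentially the same computation that underlies the associativity/Rota-Baxter compatibility of the product~\eqref{productformula1} in~\cite{AGKO}, as you note. The paper's approach is shorter because it outsources that computation to the cited references and reduces the theorem to a basis comparison; your approach trades that dependence for an explicit inductive verification, which has the advantage of being readable without consulting the construction of $\sha^{\mathrm{NC}}_\bfk(A)$.
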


\begin{proof}
Suppose a $\bfk$-algebra $A$ has a linear basis $X$. By~\cite{EG,guo2012}, the free Rota-Baxter algebra $F_{RB}(A)$ on $A$ (denoted by $\sha^{\mathrm{NC}}_\bfk(A)$) has a linear basis consisting of Rota-Baxter words in the alphabet set $X$. By definition, a Rota-Baxter word in $X$ is a bracketed word in the alphabet set $X$ in which there are no adjacent pairs of brackets. Taking $A=\bfk$, then $X=\{1\}$. Thus a Rota-Baxter word in $X$ can only be of the form
$$u_r:=\underbrace{\lc\cdots\lc}_{r\ \text{iterations}} 1\underbrace{\rc\cdots \rc}_{r\ \text{iterations}}$$
(namely applying the bracket operator $\lc\ \rc$ to $1$ $r$ times), for $r\geq 1$, together with $u_0:=1$. Thus
$$F_{RB}(\bfk)=\sum_{r\geq 0} \bfk u_r,$$
on which the Rota-Baxter operator $Q_\bfk$ is given by $Q_\bfk(u_r)=\lc u_r\rc=u_{r+1}$. By the universal property of $F_{RB}(\bfk)$, the natural inclusion map $f:\bfk \to \sha_\bfk(\bfk)$ sending $1\to \fraka_0$ (that is, the structure map) extends to a Rota-Baxter algebra homomorphism
$$\tilde{f}: F_{RB}(\bfk) \to \sha_\bfk(\bfk).$$
As such, we obtain $\tilde{f}(u_0)=\fraka_0$ and recursively,
$$ \tilde{f}(u_{r+1})=\tilde{f}(Q_\bfk(u_r))=P_\bfk(\tilde{f}(u_r)) =P_\bfk(\fraka_r)=\fraka_{r+1}, \quad r\geq 0.$$
Therefore $\tilde{f}$ is a linear isomorphism and thus a Rota-Baxter algebra isomorphism, showing that $\sha_\bfk(\bfk)$ is the free Rota-Baxter algebra on $\bfk$ and hence the initial object in the category of Rota-Baxter $\bfk$-algebras.
\end{proof}

Thus $\sha_{\mathbf{k}}(\bfk)$ plays the same role in the category of Rota-Baxter $\mathbf{k}$-algebras as the role played by $\bfk$ in the category of $\bfk$-algebras.
\begin{defn}
Let $(R,P)$ be a Rota-Baxter $\bfk$-algebra and let $\varphi=\varphi_{(R,P)}:(\sha_\bfk(\bfk),P_\bfk)\to (R,P)$ be the unique Rota-Baxter algebra homomorphism from the initial object $(\sha_\bfk(\bfk),P_\bfk)$ in the category of Rota-Baxter $\bfk$-algebras to $(R,P)$.
The kernel of the structure map $\varphi$ is called the {\bf Rota-Baxter characteristic} of $(R,P)$.
\end{defn}

In view of Theorem \ref{freerbak},  the characteristic of a Rota-Baxter $\bfk$-algebra $R$ must be a Rota-Baxter ideal of $\sha_{\bfk}(\bfk)$. Conversely, any Rota-Baxter ideal $I$ of $\sha_\bfk(\bfk)$ is the characteristic of some Rota-Baxter algebra, for example of $\sha_\bfk(\bfk)/I$.

\begin{remark}
Based on the above observation, we will use the terminology Rota-Baxter characteristics in exchange with Rota-Baxter ideals of $\sha_\bfk(\bfk)$ in the rest of the paper.
\end{remark}

\section{Classification of the characteristics of Rota-Baxter algebras}\label{Rbi2section3}

In this section, we study Rota-Baxter characteristics and their quotients. First in Section~\mref{ss:ascend}, we introduce an invariant, called the ascent set, of a Rota-Baxter characteristic. We then apply the invariants to classify all homogeneous Rota-Baxter characteristics in Section~\mref{ss:homog}. Finally in Section~\mref{ss:general} we relate a Rota-Baxter characteristic to a homogeneous Rota-Baxter characteristic by taking the initial terms and show that the quotients of the two Rota-Baxter characteristics share the same underlying sets, but not the same $\bfk$-modules. This approach is motivated from the study in the polynomial algebra $\bfk[x_1,\cdots,x_n]$ when $\bfk$ is a field. There the quotient modulo an ideal and the quotient modulo the corresponding initial ideal are known to share the same basis~\cite[\S~5.3, Proposition~4]{CLO98}.

\subsection{The ascent set of a Rota-Baxter characteristic}
\mlabel{ss:ascend}

As recalled in the last section, the Rota-Baxter algebra $\sha_\bfk(\bfk)$, as the initial object in the category of Rota-Baxter algebras, is the direct sum
$$ \sha_\bfk(\bfk)=\bigoplus_{m\geq 0} \bfk \fraka_m$$
on the basis $\{\fraka_m\,|\,m\geq 0\}$ and hence is an $\NN$-graded $\bfk$--module. Any nonzero element $f$ of $\sha_\bfk(\bfk)$ can be uniquely written as $f=\sum\limits_{i=0}^n c_i\fraka_{i}$ with $c_n\neq 0$. Then $n$ is called the {\bf degree} of $f$ and $c_n\fraka_n$ is called the {\bf initial term} of $f$, denoted by $\deg f$ and ${\rm in}(f)$ respectively. In addition, we define $\deg 0=-\infty$. We call the set ${\rm supp}(f):=\{c_i\fraka_{i}|c_i\neq 0\}$ the {\bf support} of $f$.

Let $I$ be a Rota-Baxter characteristic, namely an ideal of $\sha_\bfk(\bfk)$. We introduce an invariant of $I$.
For each $j\in \mathbb{N}$, we denote
$$
\Omega_{j}:=\Omega_{j}(I):=\left\{b_j\in\mathbf{k}\,\left|\,(\exists f\in I)\ f=\sum\limits_{i=0}^jb_i\fraka_{i}\right.\right\}.
$$
Equivalently,
$$
\Omega_{j}=\{0\}\cup\left\{b_j\in\mathbf{k}\,\left|\,(\exists f\in I)\ {\rm{in}}(f)=b_j\fraka_{j}\right.\right\}.
$$
The smallest index $j$ such that $\Omega_j\neq 0$ is called the {\bf starting point} of $I$, denoted by $\td(I)$, that is,
\begin{align*}
\td(I)=\min\left\{j\in\NN\,\left|\,\Omega_j\neq0\right.\right\}.
\end{align*}

\begin{lemma}\label{idealisxi}
Let $I$ be a Rota-Baxter ideal of $\sha_{\bfk}(\bfk)$. Then for each $j\in \mathbb{N}$, $\Omega_j$ is an ideal of $\bfk$ and $\Omega_j\subseteq\Omega_{j+1}$.
\end{lemma}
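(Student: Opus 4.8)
The plan is to verify the two assertions directly from the definitions, using the fact that $I$ is closed under addition, under multiplication by arbitrary elements of $\sha_\bfk(\bfk)$, and under the Rota-Baxter operator $P_\bfk$.

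First I would show each $\Omega_j$ is an ideal of $\bfk$. It is clearly an additive subgroup: if $f=\sum_{i=0}^j b_i\fraka_i$ and $g=\sum_{i=0}^j b_i'\fraka_i$ lie in $I$ (with the top coefficients witnessing $b_j,b_j'\in\Omega_j$), then $f\pm g\in I$ has degree $\le j$ and $\fraka_j$-coefficient $b_j\pm b_j'$, so $b_j\pm b_j'\in\Omega_j$; and $0\in\Omega_j$ trivially. For the ideal property, given $b_j\in\Omega_j$ witnessed by $f=\sum_{i=0}^j b_i\fraka_i\in I$ and any $c\in\bfk$, I use that $\bfk=\bfk\fraka_0$ sits inside $\sha_\bfk(\bfk)$ and that $\fraka_0$ is the multiplicative identity (from \eqref{productformula1} with $m=0$, $\fraka_0\diamond\fraka_n=\fraka_n$). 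Hence $c\fraka_0\diamond f\in I$ since $I$ is an ideal, and $c\fraka_0\diamond f = \sum_{i=0}^j cb_i\fraka_i$ has $\fraka_j$-coefficient $cb_j$, so $cb_j\in\Omega_j$. Therefore $\Omega_j$ is an ideal of $\bfk$.

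Next I would prove $\Omega_j\subseteq\Omega_{j+1}$. The natural move is to apply $P_\bfk$, since $P_\bfk(\fraka_i)=\fraka_{i+1}$ shifts degrees up by exactly one and $P_\bfk(I)\subseteq I$. Given $b_j\in\Omega_j$ witnessed by $f=\sum_{i=0}^j b_i\fraka_i\in I$, the element $P_\bfk(f)=\sum_{i=0}^j b_i\fraka_{i+1}=\sum_{i=1}^{j+1} b_{i-1}\fraka_i$ lies in $I$, has degree $\le j+1$, and its $\fraka_{j+1}$-coefficient is $b_j$. By the definition of $\Omega_{j+1}$ this shows $b_j\in\Omega_{j+1}$, and hence $\Omega_j\subseteq\Omega_{j+1}$.

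Neither step presents a genuine obstacle; the only point requiring a little care is making sure that multiplying $f$ by $c\fraka_0$ (rather than by a general element of $\sha_\bfk(\bfk)$) is what keeps the degree from rising, so that the $\fraka_j$-coefficient is exactly $cb_j$ — multiplying by $\fraka_k$ for $k\ge 1$ would push the top term up and destroy the bookkeeping. Using the unit $\fraka_0$ sidesteps this. Similarly for the second part, the fact that $P_\bfk$ raises degree by precisely one (not just weakly) is what makes the top coefficient transfer cleanly from index $j$ to index $j+1$.
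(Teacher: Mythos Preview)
Your proposal is correct and matches the paper's proof essentially step for step: additive closure via $f+g$, $\bfk$-ideal closure via scalar multiplication by $c$ (your $c\fraka_0\diamond f$ is exactly the paper's $cf$ since $\fraka_0$ is the unit), and the inclusion $\Omega_j\subseteq\Omega_{j+1}$ via applying $P_\bfk$ to a witness. The extra commentary you give about why $\fraka_0$ rather than a higher $\fraka_k$ is the right thing to multiply by is sound but not needed for the argument itself.
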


\begin{proof}
For any $b_j,c_j\in\Omega_j$, there exist $f$ and $g\in I$ such that $f=\sum\limits_{i=0}^jb_i\fraka_{i}$
and
$g=\sum\limits_{i=0}^jc_i\fraka_{i}$. Hence $f+g=\sum\limits_{i=0}^j(b_i+c_i)\fraka_{i}$ is in $I$.
So $b_j+c_j\in \Omega_j$. On the other hand, for any $c\in \mathbf{k}$,
$cf=\sum\limits_{i=0}^jcb_i\fraka_{i}\in I$. This yields $cb_j\in \Omega_j$. Thus $\Omega_j$ is an ideal of $\mathbf{k}$.

Since $I$ is a Rota-Baxter ideal of $\sha_{\bfk}(\bfk)$,  we have $P(f)=\sum\limits_{i=0}^jb_i\fraka_{i+1}\in I$. So $b_j\in\Omega_{j+1}$. Hence $\Omega_j\subseteq\Omega_{j+1}$, as required.
\end{proof}

For a given Rota-Baxter ideal $I$ of $\sha_{\bfk}(\bfk)$, Lemma~\ref{idealisxi} shows that the ideals $\Omega_j\subseteq \bfk, j\in \NN$, form a non-decreasing sequence of ideals of $\bfk$. Thus the sequence is controlled by the locations and extents where the increases occur. This motivates us to introduce the following notions.

Let
$s_1< s_2<\cdots$ be the integers $t\in \NN$ such that $\Omega_{t-1}\subsetneq \Omega_{t}$ with the convention that $\Omega_{-1}=\left\{0\right\}$.
We will use the notation $s_i, i\in [N_I]$ where $N=N_I$ is either in $\NN$ or $\infty$ with the convention adopted at the beginning of the last section.
The integers $s_i, i\in [N_I],$ are called the {\bf ascending points} of $I$ and the ideals $\Omega_{s_i}, i\in [N_I]$ are called the {\bf ascending levels} of $I$. In view of Lemma~\ref{idealisxi}, we have $s_1=\td(I)$.

Thus for a given Rota-Baxter characteristic $I$, the set of pairs
\begin{equation}
A(I):=\left\{\left . (s_j,\Omega_{s_j})\,\right|\, j\in [N_I]\right\},
\label{eq:elev}
\end{equation}
called the {\bf ascent set}, is uniquely determined by $I$.
By the definition of $\Omega_{s_j}$, we have
${s_j}<{s_{j+1}}$ and
\begin{align}\label{idealasscend}
\Omega_{{s_j}}=\Omega_{{s_j}+1}=\cdots=\Omega_{s_{j+1}-1}\subsetneq \Omega_{{s_{j+1}}}
\end{align}
for all $j\in [N_I]$, as illustrated by Figure $1$.

\begin{center}
\setlength{\unitlength}{1mm}
\begin{picture}(120,67)(-5,-5)
\put(2,5){\vector(1,0){110}}
\put(2,5){\vector(0,1){45}}
\put(118,4){$\NN$}
\put(0.5,53){$\Omega$}
\put(2,0){$0$}
\linethickness{1.5pt}
\put(2,5){\line(1,0){15.2}}
\put(17,5){\circle*{1}}
\put(15,0){$s_1$}
\put(2,15){\circle*{1}}
\put(-5,14){$\Omega_{s_1}$}
\thicklines
\multiput(17,5)(0,2){5}{\line(0,1){1}}
\linethickness{1.5pt}
\put(17,15){\line(1,0){20.2}}
\put(37,5){\circle*{1}}
\put(35,0){$s_2$}
\put(2,22){\circle*{1}}
\put(-5,21){$\Omega_{s_2}$}
\thicklines
\multiput(37,15)(0,2){4}{\line(0,1){1}}
\linethickness{1.5pt}
\put(37,22){\line(1,0){12.2}}
\put(49,5){\circle*{1}}
\put(47,0){$s_3$}
\put(2,38){\circle*{1}}
\put(-5,37){$\Omega_{s_3}$}
\thicklines
\multiput(49,22)(0,2){8}{\line(0,1){1}}
\linethickness{1.5pt}
\put(49,38){\line(1,0){30}}
\put(47,-8){Figure 1.}
\end{picture}
\end{center}

\vspace{3mm}

We next study how the information from $A(I)$ can be used to recover $I$.
Denote
\begin{equation}
\cala:=\left\{ \left.\{(s_j,\Omega_{s_j})\}_{j\in [N]} \,\right|\, s_j\in \NN, \Omega_{s_j}\leq \bfk, s_j<s_{j+1}, \Omega_{s_j}\subsetneq \Omega_{s_{j+1}}, j\in [N], 1\leq N\leq \infty\right\},
\label{eq:ascpair}
\end{equation}
called the set of {\bf ascending pairs}. So $\cala$ consists of pairs of sequences of the same lengths with one sequence of strictly increasing nonnegative integers and a second sequence of strictly increasing ideals of $\bfk$. Let $\cali=\cali(\bfk)$ denote the set of Rota-Baxter characteristics, namely the set of Rota-Baxter ideals of $\sha_\bfk(\bfk)$. Then taking the ascent set of a Rota-Baxter characteristic defines a map
\begin{equation}
\Phi: \cali \to \cala, \quad I\mapsto A(I), I\leq \sha_\bfk(\bfk).
\mlabel{eq:phi}
\end{equation}

In the rest of the paper, we study the property of this map, including its surjectivity and fibers, that is, inverse images. In Theorem~\ref{corhomoideal}, we show that the restriction of $\Phi$ to the subset of homogeneous Rota-Baxter characteristics gives a bijection to $\cala$, proving the surjectivity of $\Phi$. In Proposition~\ref{inid}, we show that two Rota-Baxter characteristics are in the same fiber if and only if they have the same initial ideal.

\begin{lemma}\label{generatingset2.3}
Let $I$ be a Rota-Baxter ideal of $\sha_\bfk(\bfk)$ with $A(I)=\left\{ (s_j,\Omega_{s_j})\,|\, j\in [N_I]\right\}$.
For each $j\in[N_I]$, we let $\Theta_j:=\left\{\omega_{s_j,\ell}\,\left|\,\ell\in [N_j]\right.\right\}$ be a set of generators of the ideal
$\Omega_{s_j}$. Here $N_j$ is either a positive integer or $\infty$. For each $\omega_{s_j,\ell}\in \Theta_j$, let $f_{s_j,\ell}$ be an element of $I$ whose initial term is $\omega_{s_j,\ell}\fraka_{s_j}$.
Then the set
$$\bigcup_{j\in [N_I]}\left\{\left. f_{s_j,\ell}\,\right|\,  \ell\in [N_j]\right\}$$
is a generating set of the Rota-Baxter ideal $I$.
\label{lem:gengen}
\end{lemma}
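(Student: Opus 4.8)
The plan is to show that the ideal $J$ generated by the displayed set $\bigcup_{j\in[N_I]}\{f_{s_j,\ell}\mid \ell\in[N_j]\}$ coincides with $I$. One containment is immediate: each $f_{s_j,\ell}$ lies in $I$, and since $I$ is a Rota-Baxter ideal, $J\subseteq I$ as Rota-Baxter ideals. For the reverse containment $I\subseteq J$, I would argue by induction on the degree $\deg f$ of an arbitrary nonzero $f\in I$, peeling off one initial term at a time and reducing the degree.

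\textbf{Setup for the induction.} Let $f\in I$ be nonzero with $\mathrm{in}(f)=c\,\fraka_n$, $c\neq 0$, so $\deg f=n$. By definition $c\in\Omega_n$, and $\Omega_n\neq\{0\}$, so $n\geq\td(I)=s_1$. Let $j$ be the unique index in $[N_I]$ with $s_j\leq n< s_{j+1}$ (with the convention $s_{N_I+1}=\infty$ when $N_I$ is finite); by \eqref{idealasscend} we have $\Omega_n=\Omega_{s_j}$. Hence $c\in\Omega_{s_j}$, so we may write $c=\sum_{\ell} r_\ell\,\omega_{s_j,\ell}$ for finitely many $r_\ell\in\bfk$. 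Now for each such $\ell$, the element $f_{s_j,\ell}\in I\subseteq J$ has initial term $\omega_{s_j,\ell}\fraka_{s_j}$ and degree $s_j$; applying the Rota-Baxter operator $P_\bfk$ a total of $n-s_j$ times gives $P_\bfk^{\,n-s_j}(f_{s_j,\ell})\in J$ with initial term $\omega_{s_j,\ell}\fraka_n$ (since $P_\bfk$ shifts each basis element up by one and is $\bfk$-linear). Therefore
$$
g:=\sum_{\ell} r_\ell\, P_\bfk^{\,n-s_j}(f_{s_j,\ell}) \in J
$$
has initial term $c\,\fraka_n$, the same as $f$. Consequently $f-g\in I$ has degree strictly less than $n$.

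\textbf{Closing the induction.} If $f-g=0$ then $f=g\in J$ and we are done. Otherwise $\deg(f-g)<n$, and since $f-g\in I$, the induction hypothesis gives $f-g\in J$, whence $f=g+(f-g)\in J$. The base case is handled uniformly: an element of $I$ of degree $<s_1$ must be zero (as $\Omega_m=\{0\}$ for $m<s_1$), so the induction bottoms out. This proves $I\subseteq J$ and hence $I=J$.

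\textbf{Main obstacle.} The only slightly delicate point is bookkeeping: one must be sure that the sum expressing $c$ in terms of the generators $\omega_{s_j,\ell}$ is finite even when $N_j=\infty$ (which holds by the definition of ``generating set'' of an ideal), and that the correct index $j$ is selected via the constancy interval \eqref{idealasscend} rather than naively using $\Omega_n$'s own generators — using $\Omega_{s_j}$ is essential because the chosen elements $f_{s_j,\ell}$ have initial terms at degree $s_j$, which we then push up to degree $n$. Beyond this indexing care, the argument is a routine degree-reduction, so no real difficulty is expected.
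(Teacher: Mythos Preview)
Your proof is correct and follows essentially the same approach as the paper's: both argue by induction on $\deg f$, locate the index $j$ with $s_j\leq \deg f<s_{j+1}$, express the leading coefficient in terms of the generators of $\Omega_{s_j}$, and subtract the corresponding combination $\sum r_\ell\,P_\bfk^{\,n-s_j}(f_{s_j,\ell})$ to reduce degree. The only cosmetic difference is that the paper treats $\deg f=s_1$ as an explicit base case, whereas you handle it uniformly by noting that elements of $I$ of degree below $s_1$ vanish.
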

We call the  above-mentioned set $\bigcup\limits_{j\in [N_I]}\left\{\left. f_{s_j,\ell}\,\right|\,  \ell\in [N_j]\right\}$ an {\bf ascent generating set}
of the Rota-Baxter ideal $I$.

\begin{proof}
Let $I'$ be the Rota-Baxter ideal generated by the set $\bigcup\limits_{j\in [N_I]}\left\{\left. f_{s_j,\ell}\,\right|\,  \ell\in [N_j]\right\}$.
Since $I'\subseteq I$ is trivial, it suffices to show that each element $f$ of $I$ belongs to $I'$.

So take an $f\in I$. Then $f$ is in $I'$ if $f=0$.
We next assume that $f\neq 0$. Then we have ${\rm in}(f)=b\fraka_{\deg f}$ for some $0\neq b\in \bfk$.
We proceed by induction on $\deg f$. Clearly, $\deg f\geq \td(I)=s_1$.

If $\deg f=s_1$, then $b\in \Omega_{s_1}$. So there exist $c_{s_1,\ell}\in \bfk, \ell\in [N_1],$ all but finitely many of which being zero, such that $b=\sum\limits_{\ell\in [N_1]}c_{s_1,\ell}\omega_{s_1,\ell}$.
Thus the element
$$g=f-\sum\limits_{\ell\in [N_1]}c_{s_1,\ell}f_{s_1,\ell}$$
is in $I$. But now the degree of $g$ is less than $\deg f$. It follows from $\deg f=\td(I)$ that $g$ must be $0$,
which means that $f=\sum\limits_{\ell\in[N_1]}c_{s_1,\ell}f_{s_1,\ell}$ is in $I'$ and we are done. For a given $n\geq \td(I)$,
assume that all $f\in I$ with $\deg f\leq n$ are in $I'$ and take $f\in I$ with $\deg f=n+1$.
Then there exists $r\in [N_I]$ such that $s_{r}\leq n+1<s_{r+1}$ with the convention that $s_{N_I+1}=\infty$ if $N_I$ is finite.
By Eq. \eqref{idealasscend}, we have $\Omega_{n+1}=\Omega_{s_r}$. So $b=\sum\limits_{\ell\in [N_r]}c_{s_r,\ell}\omega_{s_r,\ell}$ where $c_{s_r,\ell}\in \bfk, \ell\in [N_r]$,
with all but a finite number of $c_{s_r,\ell}$ being zero.
Put
$$
h=f-\sum\limits_{\ell\in[N_r]} c_{s_r,\ell}P^{n+1-s_r}\left(f_{s_r,\ell}\right).
$$
Then $h\in I$ with $\deg h<\deg f$. So we can apply the induction hypothesis to obtain that $h$ is in $I'$ and hence $f$ is in $I'$, as required.
\end{proof}

\subsection{Classification of homogeneous Rota-Baxter characteristics}
\mlabel{ss:homog}
We now apply ascent sets to classify homogeneous Rota-Baxter characteristics.

\begin{defn}
A Rota-Baxter ideal $I$ of $\sha_{\bfk}(\bfk)$ is called a {\bf homogeneous Rota-Baxter ideal} if $I$ is a Rota-Baxter ideal generated by a set of homogeneous elements. Then the Rota-Baxter characteristic $I$ is called {\bf homogeneous}.
\end{defn}

We next show that, for a Rota-Baxter ideal of $\sha_\bfk(\bfk)$, its homogeneity as a Rota-Baxter ideal is equivalent to its homogeneity as an ideal. For this purpose, we first give a general relation between generating a Rota-Baxter ideal and generating an ideal in a Rota-Baxter algebra.

\begin{lemma}\label{rbigensetlem}
Let $(R,P)$ be a Rota-Baxter  $\mathbf{k}$-algebra of weight $\lambda$, and $S$ a subset of $R$.
Then the Rota-Baxter ideal  $(S)_{RB}$ generated by $S$ is the ideal of $R$ generated by the set
\begin{align}\label{rtgeseteq}
S_{RB}:=\bigcup_{m\in\mathbb{N}}\left\{(\circ_{i=1}^mP_{x_{i}})(a)|a\in S, x_{i}\in R, 1\leq i\leq m\right\},
\end{align}
where $(\circ_{i=1}^mP_{x_{i}})(a):=P(x_{m}P(x_{m-1}P(\cdots P(x_{1}a))))$ with the convention that $(\circ_{i=1}^0P_{x_{i}})(a):=a$.

If $R=\sha_{\bfk}(\bfk)$, then it suffices to take $x_{i}$ to be the homogeneous elements in Eq.~\eqref{rtgeseteq}.
\end{lemma}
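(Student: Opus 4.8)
The plan is to prove the two inclusions between $(S)_{RB}$, the Rota-Baxter ideal generated by $S$, and $(S_{RB})$, the ordinary two-sided ideal of $R$ generated by the set $S_{RB}$. The inclusion $(S_{RB})\subseteq(S)_{RB}$ is immediate once we check $S_{RB}\subseteq(S)_{RB}$, and this follows by induction on $m$: the $m=0$ element is $a\in S$, and $(\circ_{i=1}^{m+1}P_{x_i})(a)=P\big(x_{m+1}\,(\circ_{i=1}^{m}P_{x_i})(a)\big)$ lies in $(S)_{RB}$ by the inductive hypothesis together with the fact that $(S)_{RB}$ is closed under left multiplication (being an ideal) and under $P$ (being a Rota-Baxter ideal).

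For the reverse inclusion, I would use that $(S)_{RB}$ is the \emph{smallest} Rota-Baxter ideal containing $S$, while $(S_{RB})$ is an ideal containing $S$ (take $m=0$ in the definition of $S_{RB}$); so it suffices to prove that $(S_{RB})$ is $P$-stable. By additivity of $P$ this amounts to showing $P(rbs)\in(S_{RB})$ for all $b\in S_{RB}$ and $r,s\in R$; since the algebra of interest, $\sha_\bfk(\bfk)$, is commutative, I rewrite $rbs=(rs)b$ and treat $P\big((rs)b\big)$. Write $b=(\circ_{i=1}^{m}P_{x_i})(a)$ with $a\in S$. If $m=0$, then $b=a$ and $P(rbs)=P\big((rs)a\big)=(\circ_{i=1}^{1}P_{rs})(a)\in S_{RB}$. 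If $m\geq 1$, then $b=P(x_m b')$ with $b':=(\circ_{i=1}^{m-1}P_{x_i})(a)\in S_{RB}$, and applying the Rota-Baxter identity \eqref{rtequ} to $rs$ and $x_m b'$ yields
$$P\big((rs)\,P(x_m b')\big)=P(rs)\,P(x_m b')-P\big((P(rs)x_m)\,b'\big)-\lambda\,P\big((rs\,x_m)\,b'\big).$$
The first summand is $P(rs)\cdot b$, an ideal multiple of $b\in S_{RB}$; the last two summands are, up to sign and the factor $\lambda$, the elements $(\circ_{i=1}^{m}P_{z_i})(a)$ and $(\circ_{i=1}^{m}P_{w_i})(a)$ obtained from $b$ by replacing the top decoration $x_m$ by $P(rs)x_m$, respectively by $rs\,x_m$, and hence they lie in $S_{RB}$. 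Thus $P(rbs)\in(S_{RB})$, so $(S_{RB})$ is a Rota-Baxter ideal containing $S$ and therefore contains $(S)_{RB}$.

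For the last assertion ($R=\sha_\bfk(\bfk)$), note that $(x_1,\dots,x_m)\mapsto(\circ_{i=1}^{m}P_{x_i})(a)$ is $\bfk$-multilinear, since $P$ is linear and $\diamond$ is bilinear. Expanding each $x_i$ into its finitely many homogeneous components $x_i=\sum_j x_{i,j}$ with $x_{i,j}\in\bfk\fraka_j$, every $(\circ_{i=1}^{m}P_{x_i})(a)$ becomes a finite $\bfk$-combination of elements $(\circ_{i=1}^{m}P_{x_{i,j_i}})(a)$ with homogeneous decorations; hence the ideal generated by $S_{RB}$ is already generated by the subset of $S_{RB}$ consisting of elements with homogeneous $x_i$.

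The step I expect to be the main obstacle is the $P$-closure computation in the second paragraph: one must invoke \eqref{rtequ} in precisely the form that strips off the outermost $P$ and leaves each resulting term manifestly inside $(S_{RB})$ --- either an ideal multiple of an element of $S_{RB}$, or an element of $S_{RB}$ with a lengthened decoration list --- and one must keep the bookkeeping of the decorations $x_i$ straight through the substitutions of $P(rs)x_m$ and $rs\,x_m$ for $x_m$. (A genuinely noncommutative $R$ would in addition force one to allow right-hand decorations, which is the reason for specializing here to the commutative algebra $\sha_\bfk(\bfk)$ at hand.)
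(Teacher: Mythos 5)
Your proof is correct, but the reverse inclusion takes a substantially longer route than necessary, and the detour actually obscures the one-line observation the paper relies on. To show that the ordinary ideal $(S_{RB})$ is $P$-stable, you need $P\big((rs)\,b\big)\in(S_{RB})$ for $b=(\circ_{i=1}^{m}P_{x_i})(a)\in S_{RB}$ and $rs\in R$. But by the very definition of $S_{RB}$,
\[
P\big((rs)\,(\circ_{i=1}^{m}P_{x_i})(a)\big)=(\circ_{i=1}^{m+1}P_{y_i})(a),\qquad y_i=x_i\ (i\le m),\ y_{m+1}=rs,
\]
which is again an element of $S_{RB}$ --- the set is precisely built so that prefixing another decoration and applying $P$ stays inside it. That observation (additivity of $P$ plus this identity) is the whole of the paper's proof; no Rota-Baxter relation is needed. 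Your expansion of $P\big((rs)P(x_m b')\big)$ via identity \eqref{rtequ} into three terms and then checking each one is valid mathematics, but it splits the outermost $P$ off in exactly the wrong direction: it removes the step that tautologically lands in $S_{RB}$, only to reconstruct membership term by term. It also forces the case distinction $m=0$ vs.\ $m\ge 1$, which the direct observation makes unnecessary.

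On a finer point, you were right to flag that the reduction $rbs=(rs)b$ uses commutativity and therefore only applies directly to $\sha_{\bfk}(\bfk)$; the paper's proof writes elements of the ideal as left multiples $r\,(\circ_{i=1}^{m}P_{x_i})(a)$ without comment, which is a small gap for a genuinely noncommutative $R$ (one would need to allow a right factor and correspondingly enlarge $S_{RB}$). Since the lemma is only applied to the commutative algebra $\sha_\bfk(\bfk)$, this does not affect the paper, but your explicit attention to it is a plus. The forward inclusion and the homogeneity reduction are fine and match the paper's argument.
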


\begin{proof}
Let $I$ be the ideal of $R$ generated by $S_{RB}$. Then an element of $I$ is a sum of elements of the form $r(\circ_{i=1}^mP_{x_{i}})(a)$ with $r\in R$. Then we have
\begin{align*}
P(r(\circ_{i=1}^mP_{x_{i}})(a))=(\circ_{i=1}^{m+1}P_{x_{i}})(a),
\end{align*}
where $x_{m+1}=r$. So, by the additivity of $P$, we obtain $P(I)\subseteq I$, and hence $I$ is the Rota-Baxter ideal of $R$  generated by $S_{RB}$,
that is, $I=( S_{RB})_{RB}$.
Notice that $S\subseteq S_{RB}\subseteq (S)_{RB}$, so $(S)_{RB}=( S_{RB})_{RB}$
whence $I=(S)_{RB}$, as required. The second statement follows since any element in $\sha_\bfk(\bfk)$ is a linear combination of homogeneous elements.
\end{proof}

Now we can give the following characterization of homogeneous Rota-Baxter ideals in $\sha_\bfk(\bfk)$.
\begin{prop}\label{prophom1RBI}
Let $I\subseteq \sha_{\bfk}(\bfk)$ be a Rota-Baxter ideal. Then $I$ is a homogeneous Rota-Baxter ideal if only if $I$ is a homogeneous ideal.
\end{prop}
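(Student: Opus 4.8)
The plan is as follows; throughout, ``homogeneous ideal'' is meant with respect to the $\NN$-grading $\sha_\bfk(\bfk)=\bigoplus_{m\geq 0}\bfk\fraka_m$, i.e. an ideal $I$ with $I=\bigoplus_{m\geq 0}(I\cap\bfk\fraka_m)$, equivalently an ideal stable under taking homogeneous components.

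One direction is immediate. If $I$ is a homogeneous ideal, let $S$ be the set of homogeneous elements of $I$. Every $f\in I$ is the sum of its homogeneous components, each of which lies in $I$ and hence in $S$, so $S$ generates $I$ as an ordinary ideal; thus $I\subseteq (S)_{RB}$, while $(S)_{RB}\subseteq I$ because $I$ is a Rota-Baxter ideal containing $S$. Hence $I=(S)_{RB}$ is generated as a Rota-Baxter ideal by homogeneous elements.

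For the converse, suppose $I=(S)_{RB}$ with $S=\{c_k\fraka_{d_k}\mid k\in K\}$ a set of homogeneous elements, $c_k\in\bfk$, $d_k\in\NN$. First I would reduce to a single generator: the Rota-Baxter ideal generated by $S$ equals $\sum_{k\in K}(c_k\fraka_{d_k})_{RB}$, since this sum is a Rota-Baxter ideal containing $S$ and is contained in every Rota-Baxter ideal containing $S$. As a sum of graded submodules of $\sha_\bfk(\bfk)$ is graded, it suffices to show each $(c\fraka_d)_{RB}$ is homogeneous, and for this I would establish the explicit description
$$
(c\fraka_d)_{RB}=\bigoplus_{m\geq d}c\bfk\,\fraka_m
$$
by two inclusions. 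The right-hand side is a Rota-Baxter ideal: it is stable under $P$ since $P(\fraka_m)=\fraka_{m+1}$, and it is an ideal because, by \eqref{productformula1},
$$
\fraka_e\diamond(cg\fraka_m)=cg\sum_{i=0}^{\min(e,m)}\binom{e+m-i}{e}\binom{e}{i}\lambda^i\fraka_{e+m-i},
$$
in which every coefficient lies in $c\bfk$ and every index satisfies $e+m-i\geq\max(e,m)\geq m\geq d$; since it contains $c\fraka_d$, it contains $(c\fraka_d)_{RB}$. Conversely $(c\fraka_d)_{RB}$ contains $c\fraka_d$, hence $c\fraka_m$ for every $m\geq d$ by iterating $P$, hence $cg\fraka_m=(g\fraka_0)\diamond(c\fraka_m)$ for every $g\in\bfk$ (recall $\fraka_0$ is the identity of $\sha_\bfk(\bfk)$), which gives the reverse inclusion. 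Then $I=\sum_k(c_k\fraka_{d_k})_{RB}$ is a sum of homogeneous submodules, hence a homogeneous ideal.

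The step needing the most care --- and the reason one cannot simply invoke ``an ideal generated by homogeneous elements is homogeneous'' --- is the verification that $\bigoplus_{m\geq d}c\bfk\fraka_m$ is an ideal: when $\lambda\neq 0$ the product $\fraka_e\diamond\fraka_m$ is not homogeneous, so multiplying a homogeneous generator by a ring element produces strictly lower-degree terms. The content of the computation above is precisely that these extra terms still have degree $\geq d$ and coefficients still in $c\bfk$, so nothing escapes $\bigoplus_{m\geq d}c\bfk\fraka_m$; this makes the single-generator case work, and the reduction to single generators then settles the general case.
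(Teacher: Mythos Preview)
Your proof is correct, but the route differs from the paper's. The paper handles the nontrivial direction ($\Longrightarrow$) via Lemma~\ref{rbigensetlem}: the Rota-Baxter ideal generated by a homogeneous set $\mathcal G$ is the \emph{ordinary} ideal generated by the set $\mathcal G'$ of iterated compositions $(\circ_{k=1}^m P_{x_k})(b_{ij}\fraka_i)$ with homogeneous $x_k$, and one then shows by induction on $m$ that each such element has all of its homogeneous components in $I$. Your argument bypasses Lemma~\ref{rbigensetlem} entirely: you reduce to a single generator via $(S)_{RB}=\sum_k(c_k\fraka_{d_k})_{RB}$ and then compute the principal Rota-Baxter ideal explicitly as $(c\fraka_d)_{RB}=\bigoplus_{m\geq d}c\bfk\,\fraka_m$, which is visibly graded.

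Each approach has its virtues. Yours is more elementary and yields as a by-product an explicit description of every principal Rota-Baxter ideal of $\sha_\bfk(\bfk)$, a fact not stated in the paper. The paper's approach, on the other hand, isolates in Lemma~\ref{rbigensetlem} a general device for passing from Rota-Baxter ideals to ordinary ideals in any Rota-Baxter algebra, and then the homogeneity verification becomes a short induction; this lemma is of independent interest beyond $\sha_\bfk(\bfk)$. Both correctly identify the key point you emphasize: because $\fraka_e\diamond\fraka_m$ is not homogeneous when $\lambda\neq 0$, one must check that the lower-degree terms produced by multiplication still stay in the right place, and the inequality $e+m-i\geq m$ together with the coefficient lying in $c\bfk$ is exactly what makes this work.
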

\begin{proof}
($\Longleftarrow$) If a set of homogeneous elements generates $I$ as an ideal, then it does so as a Rota-Baxter ideal.
\smallskip

\noindent
($\Longrightarrow$) Suppose that $I$ is a homogeneous Rota-Baxter ideal. Then we can assume that there are a subset $\Lambda$ of $\NN$ and numbers $n_i\in \PP$ ($i\in \Lambda$) or $n_i=\infty$ such that $I$ is generated by the following set of homogeneous elements:
$$\mathcal{G}:=\{b_{ij}\fraka_i\in\bfk\fraka_i\,|\,j\in [n_i], i\in \Lambda\}.$$
Then, by Lemma \ref{rbigensetlem}, $I$ is the ideal generated by the set
$$
\mathcal{G}':=\left\{(\circ_{k=1}^mP_{x_{k}})(b_{ij}\fraka_i)\,|\, b_{ij}\fraka_i\in \mathcal{G}, x_{k}\in \sha_{\bfk}(\bfk)\ {\rm\ homogeneous}, 1\leq k\leq m, m\geq 0\right\}
$$
with the convention that $(\circ_{i=1}^0P_{x_{i}})(a):=a$.
Hence $I$ is contained in the homogeneous ideal generated by the supports of elements in $\mathcal{G}'$.
Thus to complete the proof, we just need to prove that for each $u\in \mathcal{G}'$, we have $\mathrm{supp}(u)\subset I$. We will prove this for  $u$ in the form $(\circ_{k=1}^mP_{x_{k}})(b_{ij}\fraka_i)$, where $b_{ij}\fraka_i\in \mathcal{G}$, by induction on $m\geq 0$. When $m=0$, we have $u=b_{ij}a_i\in \mathcal{G}$ which is assumed to be in $I$. Suppose that the statement is true for $m=k\geq 0$ and consider
$$u=(\circ_{k=1}^{m+1}P_{x_{k}})(b_{ij}\fraka_i)=
P_{x_{m+1}}\big((\circ_{k=1}^{m}P_{x_{k}})(b_{ij}\fraka_i)\big).$$
By the induction hypothesis, the support of $(\circ_{k=1}^{m}P_{x_{k}})(b_{ij}\fraka_i)$, that is, the set of homogenous components of it, is contained in $I$. Let $b\fraka_p\in I, b\in \bfk,$ be such a homogenous component and denote $x_{m+1}=c \fraka_n, c\in \bfk.$
Then by Eq. \eqref{productformula1}, we have
$$
P_{x_{m+1}}(b\fraka_p)=P\big((c\fraka_n) (b\fraka_p)\big)=P\left(\sum_{i=0}^{{\rm min}(n,p)}\binom{n+p-i}{p}\binom{p}{i}\lambda^ibc\fraka_{n+p-i}\right)
=\!\!\!\sum_{i=0}^{{\rm min}(n,p)}\binom{n+p-i}{p}\binom{p}{i}\lambda^icP^{n-i+1}(b\fraka_{p}).
$$
Since $P^{n-i+1}(b\fraka_{p})$ is in $I$, the homogeneous components of $P_{x_{m+1}}(b\fraka_p)$ are in $I$. Hence the homogeneous components of $u$ are in $I$. This completes the induction.
\end{proof}

\begin{theorem}\label{corhomoideal}
\begin{enumerate}
\item
Let $I$ be a homogeneous Rota-Baxter characteristic, that is, a homogeneous Rota-Baxter ideal of $\sha_\bfk(\bfk)$, with $A(I)=\left\{\left. (s_j,\Omega_{s_j})\,\right|\, j\in [N_I]\right\}$.
Then
\begin{equation}
I=\bigoplus_{i=s_1}^\infty \Omega_i \fraka_i =\bigoplus_{j=1}^{N_I}\left(\bigoplus_{i=s_{j}}^{s_{j+1}-1}\Omega_{s_{j}} \fraka_{i}\right)
=\left\{\begin{array}{ll}
\bigoplus\limits_{j=1}^\infty \bigoplus\limits_{i=s_j}^{s_{j+1}-1} \Omega_{s_j}\fraka_i, & N_I=\infty, \\
\left(\bigoplus\limits_{j=1}^{N_I-1} \bigoplus\limits_{i=s_j}^{s_{j+1}-1} \Omega_{s_j}\fraka_i\right)\bigoplus \left(\bigoplus\limits_{i=s_{N_I}}^\infty \Omega_{s_{N_I}}\fraka_i\right), & N_I<\infty,
\end{array}\right.
\label{eq:i}
\end{equation}
as the direct sum of the $\bfk$-modules $\Omega_{s_j} \fraka_{i}$. \label{it:homo1}
\item
The quotient $\sha_{\bfk}(\bfk)/I$ is the direct sum of the $\bfk$-modules $(\bfk/\Omega_{s_j})\fraka_i$, that is,
\begin{eqnarray}
\sha_{\bfk}(\bfk)/I&\cong& \bigoplus_{j=0}^{N_I}
\left(\bigoplus_{i=s_{j}}^{s_{j+1}-1}({\mathbf{k}}/\Omega_{s_{j}}) \fraka_{i}\right) \notag\\
&=&\left\{\begin{array}{ll}
\left(\bigoplus\limits_{i=0}^{s_1-1} \bfk\fraka_i\right) \bigoplus
\left(\bigoplus\limits_{j=1}^\infty \bigoplus\limits_{i=s_j}^{s_{j+1}-1} (\bfk/\Omega_{s_j})\fraka_i\right), & N_I=\infty, \\
\left(\bigoplus\limits_{i=0}^{s_1-1} \bfk\fraka_i\right) \bigoplus
\left(\bigoplus\limits_{j=1}^{N_I-1} \bigoplus\limits_{i=s_j}^{s_{j+1}-1} (\bfk/\Omega_{s_j})\fraka_i\right)\bigoplus \left(\bigoplus\limits_{i=s_{N_I}}^\infty (\bfk/\Omega_{s_{N_I}})\fraka_i\right), & N_I<\infty,
\end{array}\right.
\label{inhomdecompzzash1}
\label{rbida5}
\end{eqnarray}
with the convention that $s_{0}=0$, $\Omega_{s_0}=0$ and  $s_{N_I+1}=\infty$ if $N_I$ is finite.
\label{it:homo2}
\item
For any ascent pair $\{(s_j,\Omega_{s_j})\}_{j\in [N]}$ in $\cala$ defined in Eq.~\eqref{eq:ascpair}, consisting of a positive integer $N$ or $N=\infty$, strictly increasing nonnegative integers $s_i$ and strictly increasing ideals
$\Omega_{s_j}, j\in [N]$, there is a unique homogeneous Rota-Baxter ideal $I$ of $\sha_{\bfk}(\bfk)$ such that $A(I)=\{(s_j,\Omega_{s_j})\}_{j \in [N]}$. In other words,
the restriction of the map $\Phi:\cali\to \cala$ in Eq.~(\ref{eq:phi}) to the set of homogeneous Rota-Baxter characteristics is a bijection.
\label{it:homo3}
\end{enumerate}
\end{theorem}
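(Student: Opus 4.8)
The plan is to establish the three parts in order, with part~(\ref{it:homo1}) carrying the main content and parts~(\ref{it:homo2}) and~(\ref{it:homo3}) then following with little more than bookkeeping and one short computation.

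For part~(\ref{it:homo1}), I would invoke Proposition~\ref{prophom1RBI} to see that $I$ is a homogeneous ideal, so that $I=\bigoplus_{i\geq 0}(I\cap\bfk\fraka_i)$. The crux is the identity $I\cap\bfk\fraka_i=\Omega_i\fraka_i$ for each $i$: the inclusion $\subseteq$ is immediate from the definition of $\Omega_i$ (if $b\fraka_i\in I$ then $\mathrm{in}(b\fraka_i)=b\fraka_i$, so $b\in\Omega_i$), while for $\supseteq$ one notes that if $0\neq b\in\Omega_i$ then some $f\in I$ has $\mathrm{in}(f)=b\fraka_i$, whose degree-$i$ homogeneous component $b\fraka_i$ lies in $I$ by homogeneity. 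This gives $I=\bigoplus_i\Omega_i\fraka_i$; since $\Omega_i=\{0\}$ for $i<s_1=\td(I)$ the sum may be started at $i=s_1$, and grouping the indices into the intervals $[s_j,s_{j+1})$ and using Eq.~\eqref{idealasscend} (so that $\Omega_i=\Omega_{s_j}$ throughout each such interval) produces the remaining equalities in Eq.~\eqref{eq:i}, separating the cases $N_I=\infty$ and $N_I<\infty$ with the stated conventions. Part~(\ref{it:homo2}) is then immediate: from $\sha_\bfk(\bfk)=\bigoplus_i\bfk\fraka_i$ and $I=\bigoplus_i\Omega_i\fraka_i$ the quotient is $\bigoplus_i(\bfk/\Omega_i)\fraka_i$, and rewriting $\Omega_i$ as $0$ on $[0,s_1)$ and as $\Omega_{s_j}$ on $[s_j,s_{j+1})$ yields Eq.~\eqref{rbida5}.

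For part~(\ref{it:homo3}), injectivity of the restricted map is a direct consequence of part~(\ref{it:homo1}): two homogeneous Rota-Baxter characteristics with the same ascent set have, via Eq.~\eqref{idealasscend}, the same sequence $(\Omega_i)_{i\in\NN}$, hence the same decomposition $\bigoplus_i\Omega_i\fraka_i$, hence are equal. For surjectivity, given an ascending pair $\{(s_j,\Omega_{s_j})\}_{j\in[N]}$ from $\cala$, I would first extend it to a sequence by setting $\Omega_i:=\{0\}$ for $i<s_1$ and $\Omega_i:=\Omega_{s_j}$ for $s_j\leq i<s_{j+1}$, so that $(\Omega_i)_i$ is a non-decreasing chain of ideals of $\bfk$; then set $I:=\bigoplus_i\Omega_i\fraka_i$ and check that $I$ is a homogeneous Rota-Baxter ideal with $A(I)=\{(s_j,\Omega_{s_j})\}_{j\in[N]}$. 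Homogeneity is clear; the Rota-Baxter property follows from $P_\bfk(\Omega_i\fraka_i)=\Omega_i\fraka_{i+1}\subseteq\Omega_{i+1}\fraka_{i+1}$; and once $I$ is known to be an ideal, the identity $\Omega_j(I)=\Omega_j$ drops out of $I=\bigoplus_i\Omega_i\fraka_i$ exactly as in part~(\ref{it:homo1}), so the ascending points and levels of $I$ are the prescribed $s_j$ and $\Omega_{s_j}$.

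The only genuine computation, and the step I expect to be the main obstacle, is verifying that $I=\bigoplus_i\Omega_i\fraka_i$ is closed under multiplication by $\sha_\bfk(\bfk)$. By bilinearity it suffices to treat a product $(c\fraka_n)\diamond(b\fraka_p)$ with $c\in\bfk$ and $b\in\Omega_p$; the product formula~\eqref{productformula1} expresses this as a $\bfk$-linear combination of the $\fraka_{n+p-i}$, $0\leq i\leq\min(n,p)$, each coefficient lying in $\bfk\, bc\subseteq\Omega_p$, and since $n+p-i\geq p$ the monotonicity of the chain gives $\Omega_p\subseteq\Omega_{n+p-i}$, so every term lies in $I$. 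This is the only point where both the explicit shape of $\diamond$ and the increasing nature of the ascending levels are essential; the remainder of the argument is formal.
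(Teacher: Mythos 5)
Your proof is correct and follows essentially the same route as the paper: part~(\ref{it:homo1}) via Proposition~\ref{prophom1RBI} and the identification $I\cap\bfk\fraka_i=\Omega_i\fraka_i$, part~(\ref{it:homo2}) by passing to the graded quotient, and part~(\ref{it:homo3}) by building $I=\bigoplus_i\Omega_i\fraka_i$ and verifying closure under $\diamond$ through the product formula together with monotonicity of the chain $(\Omega_i)$. You spell out a few steps the paper leaves implicit (the graded-component identification and the two composite identities establishing the bijection), but the underlying argument and the one substantive computation are the same.
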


\begin{proof}
(\ref{it:homo1})
By Proposition \ref{prophom1RBI}, the $\bfk$-module $I$ is the direct sum of the $\bfk$-modules $\Omega_{i} \fraka_{i}$.
\smallskip

\noindent
(\ref{it:homo2}) Since $I$ is a graded submodule of $\sha_{\bfk}(\bfk)$, Item~(\ref{it:homo2}) is proved.
\smallskip

\noindent
(\ref{it:homo3})
For a given ascending pair $\{(s_j,\Omega_{s_j})\}_{j\in [N]}\in \cala$, define $I\subseteq \sha_\bfk(\bfk)$ as in Eq.~(\ref{eq:i}) with $N_I$ replaced by $N$.
We next show that $I$ is a homogeneous Rota-Baxter ideal.

It follows from $s_1<s_2<\cdots$  and
$\Omega_{s_1}\subsetneq \Omega_{s_2}\subsetneq\cdots$ that $P(I)\subseteq I$. $I$ is a $\bfk$-submodule since $\Omega_{s_j}$ is an ideal of $\bfk$ for each
$j\in [N]$. Further by its definition, $I$ is homogeneous as a $\bfk$-module. We next prove that
$I(\sha_{\bfk}(\bfk))\subseteq I$.
Since each element  of $\sha_{\bfk}(\bfk)$ can be written as the summand of finitely many homogeneous components,
it suffices to show that $(b\fraka_{n})(c\fraka_{p})\in I$, where $b\fraka_{n}\in I$ and $c\fraka_{p}\in\sha_{\bfk}(\bfk)$.
By Eq.~\eqref{productformula1},   we have
\begin{align*}
\left(b\fraka_{n}\right)\left( c\fraka_{p}\right)
=\sum\limits_{i=0}^{{\rm min}(n,p)}\binom{n+p-i}{p}\binom{p}{i}bc\lambda^i\fraka_{n+p-i}.
\end{align*}
Since $b\fraka_{n}\in I$, we have $b\in\Omega_{s_r}$ where $r\in[N]$ is such that $s_r\leq n<s_{r+1}$.
For any given $i$ with $0\leq i\leq {\rm min}(n,p)$, let $t$ be the integer in $[N]$ such that $s_t\leq n+p-i<s_{t+1}$.
Note that we always have $n+p-i\geq n\geq s_r$, so $s_r\leq s_t$. Thus $\Omega_{s_r}\subseteq \Omega_{s_t}$ whence $b\in \Omega_{s_t}$.
Then for each $i$ with $0\leq i\leq {\rm min}(n,p)$, the element
$\binom{n+p-i}{p}\binom{p}{i}bc\lambda^i\fraka_{n+p-i}$ is in $\Omega_{s_t}\fraka_{n+p-i}.$
This shows that $\left(b\fraka_{n}\right)\left( c\fraka_{p}\right)\in I$. Thus $I$ is a homogeneous Rota-Baxter ideal of $\sha_{\bfk}(\bfk)$.

Thus we obtain a map $\Psi$ from $\cala$ to the set of homogeneous Rota-Baxter ideals of $\sha_\bfk(\bfk)$. It is direct to check that the left and right compositions of $\Psi$ with the restriction of $\Phi$ to homogeneous Rota-Baxter ideals of $\sha_\bfk(\bfk)$ are the identities. Thus the restriction of $\Phi$ to the set of homogeneous Rota-Baxter ideals is bijective.
\end{proof}

\subsection{Rota-Baxter characteristics and their quotients}
\label{ss:general}

Now we relate a Rota-Baxter characteristic to the homogeneous case and describe the structure of the quotient of $\sha_\bfk(\bfk)$ modulo a Rota-Baxter characteristic.
We will see that such a quotient has the underlying set of the form defined by Eq.~\eqref{inhomdecompzzash1}.
However, the same underlying set may be shared by different Rota-Baxter characteristics, as can be seen from the following theorem.
As mentioned at the beginning of this section, this theorem is a property of $\sha_{\bfk}(\bfk)$ coming as an analog to polynomial algebras.

We first relate a Rota-Baxter ideal of $\sha_\bfk(\bfk)$ to a suitable homogeneous Rota-Baxter ideal.

\begin{defn}
The {\bf initial Rota-Baxter ideal} of a Rota-Baxter ideal $I$ of $\sha_{\bfk}(\bfk)$ is the Rota-Baxter ideal ${\rm in}(I)$ generated by
$\left\{{\rm in}(f)|f\in I\right\}$.
\end{defn}

Thus ${\rm in}(I)$ is a homogeneous ideal.

\begin{prop}\label{inid}
Let $I$ be a Rota-Baxter ideal of $\sha_\bfk(\bfk)$. Then for its ascent set we have $A(I)=A({\rm in}(I))$. In other words, two Rota-Baxter characteristics are in the same fiber under the map $\Phi$ in Eq.~(\ref{eq:phi}) if and only if they have the same initial ideal.
\end{prop}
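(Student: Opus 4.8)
The plan is to identify the initial Rota-Baxter ideal $\mathrm{in}(I)$ explicitly as the homogeneous Rota-Baxter ideal
\begin{equation*}
I^h:=\bigoplus_{j\geq 0}\Omega_j(I)\,\fraka_j,
\end{equation*}
after which $A(I)=A(\mathrm{in}(I))$ is immediate because, by its definition, the ascent set depends only on the sequence $(\Omega_j)_{j\geq 0}$ of ideals of $\bfk$.

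First I would check that $I^h$ is a homogeneous Rota-Baxter ideal of $\sha_\bfk(\bfk)$. It is a graded $\bfk$-submodule since each $\Omega_j(I)$ is an ideal of $\bfk$ by Lemma~\ref{idealisxi}; it is stable under $P_\bfk$ since $P_\bfk(\Omega_j(I)\fraka_j)=\Omega_j(I)\fraka_{j+1}\subseteq\Omega_{j+1}(I)\fraka_{j+1}$, again by the monotonicity in Lemma~\ref{idealisxi}. For the ideal property it suffices, by bilinearity, to verify $(b\fraka_j)\diamond(c\fraka_p)\in I^h$ for $b\in\Omega_j(I)$ and $c\in\bfk$; expanding by Eq.~\eqref{productformula1}, each summand is $\binom{j+p-i}{j}\binom{j}{i}\lambda^i bc\,\fraka_{j+p-i}$ with $0\leq i\leq\min(j,p)$, so its coefficient lies in $\Omega_j(I)\subseteq\Omega_{j+p-i}(I)$ (using $j+p-i\geq j$ together with the monotonicity of the $\Omega$'s), hence the summand lies in $\Omega_{j+p-i}(I)\fraka_{j+p-i}\subseteq I^h$. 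This index-shift bookkeeping is the only place any care is needed; it is otherwise routine.

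Next I would prove $\mathrm{in}(I)=I^h$ by two inclusions. For $\mathrm{in}(I)\subseteq I^h$: each nonzero $f\in I$ has $\mathrm{in}(f)=b\fraka_{\deg f}$ with $b\in\Omega_{\deg f}(I)$ by the very definition of $\Omega_{\deg f}$, so $\mathrm{in}(f)\in I^h$; since $I^h$ is a Rota-Baxter ideal containing the generating set $\{\mathrm{in}(f)\mid f\in I\}$ of $\mathrm{in}(I)$, it contains $\mathrm{in}(I)$. For $I^h\subseteq\mathrm{in}(I)$: for each $j$ and each nonzero $b\in\Omega_j(I)$ there is, by definition of $\Omega_j(I)$, some $f\in I$ with $\mathrm{in}(f)=b\fraka_j$, whence $b\fraka_j\in\mathrm{in}(I)$; thus $\Omega_j(I)\fraka_j\subseteq\mathrm{in}(I)$ for all $j$ and $I^h\subseteq\mathrm{in}(I)$. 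Since $\mathrm{in}(I)=I^h$ is graded with degree-$j$ component $\Omega_j(I)\fraka_j$, reading off the definition of $\Omega_j$ gives $\Omega_j(\mathrm{in}(I))=\Omega_j(I)$ for every $j$, and therefore $A(\mathrm{in}(I))=A(I)$.

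For the reformulation in terms of fibers, let $I_1,I_2$ be Rota-Baxter characteristics; by the above $A(I_t)=A(\mathrm{in}(I_t))$ for $t=1,2$. If $A(I_1)=A(I_2)$, then $A(\mathrm{in}(I_1))=A(\mathrm{in}(I_2))$, and since $\mathrm{in}(I_1),\mathrm{in}(I_2)$ are homogeneous Rota-Baxter characteristics and the restriction of $\Phi$ to homogeneous Rota-Baxter characteristics is injective by Theorem~\ref{corhomoideal}(\ref{it:homo3}), this forces $\mathrm{in}(I_1)=\mathrm{in}(I_2)$. Conversely, if $\mathrm{in}(I_1)=\mathrm{in}(I_2)$ then $A(I_1)=A(\mathrm{in}(I_1))=A(\mathrm{in}(I_2))=A(I_2)$. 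I do not anticipate a genuine obstacle; the one computational point, as noted, is the closure of $I^h$ under multiplication, where one must exploit $j+p-i\geq j$ to invoke the monotonicity of the $\Omega_j$.
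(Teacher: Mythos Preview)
Your proof is correct. The paper's proof is much shorter: it directly argues that $\Omega_i(I)=\Omega_i(\mathrm{in}(I))$ for each $i$, via the chain ``$b\in\Omega_i(I)$ iff there exists $f\in I$ with $\mathrm{in}(f)=b\fraka_i$, which is equivalent to $b\in\Omega_i(\mathrm{in}(I))$.'' The forward direction of that last equivalence is immediate (such a $b\fraka_i$ is a generator of $\mathrm{in}(I)$), but the backward direction---passing from $b\in\Omega_i(\mathrm{in}(I))$ back to an $f\in I$ with leading term $b\fraka_i$---is glossed over; it really needs something like your identification $\mathrm{in}(I)=\bigoplus_j\Omega_j(I)\fraka_j$ to be airtight.

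So your route is different in emphasis: rather than comparing the $\Omega_i$'s directly, you first pin down $\mathrm{in}(I)$ completely as the homogeneous Rota-Baxter ideal $I^h$, then read off $\Omega_i(\mathrm{in}(I))=\Omega_i(I)$ from the graded decomposition. This costs you the verification that $I^h$ is a Rota-Baxter ideal (which essentially repeats the argument in Theorem~\ref{corhomoideal}(\ref{it:homo3})), but it buys you a cleaner and more self-contained argument, and as a bonus it makes explicit that $\mathrm{in}(I)$ is exactly the unique homogeneous Rota-Baxter characteristic in the fiber $\Phi^{-1}(A(I))$.
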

\begin{proof}
Assume that $A(I):=\left\{\left . (s_j,\Omega_{s_j})\,\right|\, j\in [N_I]\right\}$.
We only need to show that $\Omega_{i}(I)=\Omega_{i}({\rm in}(I))$ for all $i\in \NN$.
If $i<\td(I)$, then there is no element $f\in I$ with degree lower than $i$, so $\Omega_i(I)=\Omega_{i}({\rm in}(I))=\emptyset$.
Next we assume that $i\geq \td(I)$.
Take a nonzero element $b\in \bfk$. Then $b\in \Omega_{i}(I)$ if and only if there exists an element $f$ in $I$ such that ${\rm in}(f)=b\fraka_{i}$, which
is equivalent to $b\in \Omega_{i}({\rm in}(I))$. Thus $A(I)=A({\rm in}(I))$, as required.
\end{proof}

Now we can determine the underlying set of the quotient of a Rota-Baxter characteristic.

\begin{theorem}\label{mainthmdcomq}
Let $I$ be a Rota-Baxter ideal of $\sha_{\mathbf{k}}(\mathbf{k})$.
Then $\sha_{\mathbf{k}}(\mathbf{k})/I$ has the same underlying set as $\sha_{\mathbf{k}}(\mathbf{k})/{\rm in}(I)$ in Eq.~(\ref{inhomdecompzzash1}).
More precisely, for each $j$ with $j\in\{0\}\cup[N_I]$, fix a complete set $T_j\subseteq \bfk$, such that $0\in T_j$, of representatives of $\bfk$ modulo $\Omega_{s_j}$, with the convention that $s_{0}=0$, $\Omega_{s_0}=0$ and  $s_{N_I+1}=\infty$ if $N_I$ is finite.
Then $\sha_{\bfk}(\bfk)/I$ has a complete set of representatives given by the following subset of $\sha_\bfk(\bfk)=\bigoplus\limits_{m=0}^\infty \bfk \fraka_m$:
\begin{equation}
\mathcal{T}:=\bigoplus\limits_{j=0}^{N_I}
\left(\bigoplus\limits_{i=s_{j}}^{s_{j+1}-1}T_j \fraka_{i}\right)
=\left\{\begin{array}{ll}
\left(\bigoplus\limits_{i=0}^{s_1-1} \bfk\fraka_i\right) \bigoplus
\left(\bigoplus\limits_{j=1}^\infty \bigoplus\limits_{i=s_j}^{s_{j+1}-1} T_j\fraka_i\right), & N_I=\infty, \\
\left(\bigoplus\limits_{i=0}^{s_1-1} \bfk\fraka_i\right) \bigoplus
\left(\bigoplus\limits_{j=1}^{N_I-1} \bigoplus\limits_{i=s_j}^{s_{j+1}-1} T_j\fraka_i\right)\bigoplus \left(\bigoplus\limits_{i=s_{N_I}}^\infty T_{N_I}\fraka_i\right), & N_I<\infty.
\end{array}\right.
\label{decompzzash2}
\end{equation}
\end{theorem}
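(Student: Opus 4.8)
The plan is to establish the sharper, self-contained assertion that the set $\mathcal{T}$ of Eq.~\eqref{decompzzash2} is a complete set of coset representatives for $\sha_{\bfk}(\bfk)/I$; the stated identification of underlying sets with $\sha_{\bfk}(\bfk)/{\rm in}(I)$ then follows immediately, since by Proposition~\ref{inid} we have $A({\rm in}(I))=A(I)$, so Theorem~\ref{corhomoideal}(\ref{it:homo2}) gives that $\sha_{\bfk}(\bfk)/{\rm in}(I)$ has underlying set $\bigoplus_{j=0}^{N_I}\bigoplus_{i=s_j}^{s_{j+1}-1}(\bfk/\Omega_{s_j})\fraka_i$ with the same data $s_j,\Omega_{s_j}$, and this is in canonical bijection with $\mathcal{T}$ via the chosen representatives $T_j$ (using $\Omega_{s_0}=0$, hence $T_0=\bfk$). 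Thus it remains to prove two things: (a) every $f\in\sha_{\bfk}(\bfk)$ is congruent modulo $I$ to some element of $\mathcal{T}$; and (b) two distinct elements of $\mathcal{T}$ are never congruent modulo $I$.

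For (a), I would induct on $\deg f$, the base case $f=0$ being trivial. Given $f\neq 0$, put $n=\deg f$ and let $r\in\{0\}\cup[N_I]$ be the unique index with $s_r\leq n<s_{r+1}$ (using the conventions $s_0=0$, $\Omega_{s_0}=0$ and $s_{N_I+1}=\infty$), so that $\Omega_n(I)=\Omega_{s_r}$ by Eq.~\eqref{idealasscend}. Writing $c\in\bfk$ for the coefficient of $\fraka_n$ in $f$, choose the unique $t_n\in T_r$ with $c-t_n\in\Omega_{s_r}=\Omega_n(I)$; by the defining property of $\Omega_n(I)$ there is $g\in I$ of degree at most $n$ whose $\fraka_n$-coefficient is exactly $c-t_n$ (one may take $g=0$ when $c-t_n=0$). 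Then $f-g=t_n\fraka_n+f'$ with $\deg f'\leq n-1$, and the induction hypothesis applied to $f'$ yields $t'\in\mathcal{T}$ with $f'-t'\in I$; since $\deg t'<n$ and $t_n\in T_r$, the element $t:=t_n\fraka_n+t'$ lies in $\mathcal{T}$ and $f-t=g+(f'-t')\in I$. The only subtle point — and the real obstacle — is that the naive subtraction $f-g$ does \emph{not} lower the degree, because the leading term of $f$ is merely replaced by a chosen representative rather than killed; the induction must therefore be arranged so as to \emph{peel off} the now-fixed top term $t_n\fraka_n$ and recurse on the strictly lower-degree remainder $f'$. The infinite case $N_I=\infty$ needs no extra argument, as every element of $\sha_{\bfk}(\bfk)$ has finite support.

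For (b), suppose $t,t'\in\mathcal{T}$ with $h:=t-t'\in I$ and $h\neq 0$, and let $n=\deg h$ with leading coefficient $b\neq 0$, so that $b\in\Omega_n(I)=\Omega_{s_r}$ where $s_r\leq n<s_{r+1}$. By the shape of $\mathcal{T}$ in Eq.~\eqref{decompzzash2}, the coefficients of $\fraka_n$ in $t$ and in $t'$ both belong to $T_r$, while their difference is $b\in\Omega_{s_r}$; since $T_r$ is a complete set of representatives of $\bfk/\Omega_{s_r}$, two of its members lying in the same coset must be equal, forcing $b=0$, a contradiction. Hence $t=t'$, and combined with (a) this proves that $\mathcal{T}$ is a complete set of representatives for $\sha_{\bfk}(\bfk)/I$, completing the proof.
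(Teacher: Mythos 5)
Your proposal is correct and follows essentially the same strategy as the paper's proof: reduce via Proposition~\ref{inid} and Theorem~\ref{corhomoideal}(\ref{it:homo2}) to showing $\mathcal{T}$ is a complete set of coset representatives, prove existence by induction on degree (subtracting an element of $I$ whose leading coefficient matches $c-t_n$, then recursing on the strictly lower-degree remainder), and prove uniqueness by inspecting the leading coefficient of a difference of two representatives. The one genuine streamlining you make is in producing the element $g\in I$ to subtract: you invoke the definition of $\Omega_n(I)$ directly (an element of $I$ of degree at most $n$ with $\fraka_n$-coefficient $c-t_n$ exists by definition), whereas the paper first builds an ascent generating set $\{f_{s_j,\ell}\}$ via Lemma~\ref{generatingset2.3} and then takes $g=P^{n-s_r}\bigl(\sum_\ell c_{n,\ell}f_{s_r,\ell}\bigr)$; your route bypasses that lemma entirely, at the modest cost of being less explicit about the shape of $g$. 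One small point you glide over: you assert ``since $\deg t'<n$'' without justification. The paper handles this by strengthening the inductive statement to include the bound $\deg f'\leq\deg f$; alternatively, it follows from your own uniqueness argument (b) that any $t'\in\mathcal{T}$ with $f'-t'\in I$ must satisfy $\deg t'\leq\deg f'$, since a higher-degree leading coefficient of $t'-f'\in I$ would lie in both $T_{r'}$ and $\Omega_{s_{r'}}$ and hence vanish. Either fix should be made explicit, but it does not affect the soundness of the overall argument.
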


\begin{proof}
Let $I$ be a Rota-Baxter ideal of $\sha_{\mathbf{k}}(\mathbf{k})$ with its ascent set $ A(I)=\left\{ (s_j,\Omega_{s_j})\,|\,j\in [N_I]\right\}$.
By Proposition~\ref{inid}, we have $A(I)=A({\rm in}(I))$.
Thus it suffices to show that the underlying set of
$\sha_{\bfk}(\bfk)/I$ is $\mathcal{T}$ in light of Theorem~\ref{corhomoideal}\eqref{it:homo2}.

By convention, we take $s_0=0$ and $\Omega_{s_0}=0$.
Then $T_0=\bfk$ and
$$\bigoplus\limits_{i=0}^{s_1-1}{T_0} \fraka_{i}=\bigoplus\limits_{i=s_0}^{s_1-1}\mathbf{k} \fraka_{i}.$$
Note that  this term does not exist if $s_1=0$.

Let $f\in\sha_{\mathbf{k}}(\mathbf{k})$ be a nonzero element with ${\rm in}(f)= b_n \fraka_{n}$ for some $b_n\in\bfk$. So $\deg f=n\geq 0$.
We first show that  there is a unique $f'$ in $\mathcal{T}$ such that $f-f'$ is in $I$.

Let $\bigcup\limits_{j\in [N_I]}\left\{\left. f_{s_j,\ell}\,\right|\,  \ell\in [N_j]\right\}$ be an ascent generating set of $I$,
we may assume that for each $j\in [N_I]$ and $\ell\in [N_j]$ we have ${\rm in}(f_{s_j,\ell})=\omega_{s_j,\ell}\fraka_{s_j}$ and hence $\Omega_{s_j}$ is generated by the set
$\Theta_j:=\left\{\omega_{s_j,\ell}\,\left|\,\ell\in [N_j]\right.\right\}$ by Lemma \ref{generatingset2.3}.

If $\deg f<s_1$, then $f\in \bigoplus\limits_{i=0}^{s_1-1}\mathbf{k}\fraka_{i}$, which equals to $\bigoplus\limits_{i=s_0}^{s_1-1}T_0 \fraka_{i}$
since $s_0=0$, $\Omega_{s_0}=0$.
Thus, it is enough to take $f'=f$.

Next we assume that $\deg f\geq s_1$. We prove that there exists an element
$f'\in \mathcal{T}$ with $\deg f'\leq \deg f$ such that $f-f'\in I$ by induction on $\deg f$.
If $\deg f=s_1$, then ${\rm in}(f)=b_{s_1}\fraka_{s_1}$ for some $b_{s_1}\in\bfk$.
Since $T_1\subseteq \bfk$ is a complete set of representatives of $\bfk$ modulo $\Omega_{s_1}$,
there exists a unique $b_{s_1}'\in T_1$ such that $b_{s_1}-b_{s_1}'\in \Omega_{s_1}$.
Note that $\Omega_{s_1}$ is generated by $\Theta_1$, so there exist $c_{{s_1},\ell}$ in $\mathbf{k}$,
$\ell\in [N_1]$, with all but a finite number of $c_{{s_1},\ell}$ being zero, such that $b_{s_1}-b_{s_1}'=\sum\limits_{\ell\in [N_1]}c_{{s_1},\ell}\omega_{s_1,\ell}$.
If we take $f'=f-\sum\limits_{\ell\in [N_1]}c_{{s_1},\ell}f_{s_1,\ell}$, then $f-f'\in I$.
It is clear that $f'\in \mathcal{T}$ if $f'=0$. If $f'\neq0$, then $f'=b_{s_1}'\fraka_{s_1}+g'$ for some $g'\in \sha_{\bfk}(\bfk)$ with $\deg g'<\deg f=s_1$. Thus $g'\in \bigoplus\limits_{i=0}^{s_1-1}\mathbf{k}\fraka_{i}=\bigoplus\limits_{i=s_0}^{s_1-1}T_0 \fraka_{i}$ so that $f'$ is also in $\mathcal{T}$.

Now we assume that the claim has been proved for $f$ with $\deg f\leq n-1$ for a given $n\geq1$ and show that the claim is true when $\deg f= n$.
Assume that ${\rm in}(f)=b_n\fraka_n$.
Let $r\in  \{0\}\cup [N_I]$ be such that $s_r\leq n<s_{r+1}$.
So $\Omega_{n}=\Omega_{s_r}$ by Eq. \eqref{idealasscend}, and hence $b_n-b_n'\in\Omega_{s_r}$ for some $b_n'\in T_r$.
Then there exist $c_{n,\ell}\in \bfk$, $\ell\in [N_r]$, all but finitely many of which being zero, such that
$b_n-b_n'=\sum\limits_{\ell\in [N_r]}c_{n,\ell}\omega_{s_r,\ell}$.
Taking
$g=f-P^{n-s_r}(\sum\limits_{\ell\in [N_r]}c_{n,\ell} f_{s_r,\ell})$,
then we have $g=b_{n}'\fraka_{n}+g'$ for some $g'\in\sha_{\bfk}(\bfk)$ with $\deg g'<\deg f$.
If $g'=0$, then put $f'=b_{n}'\fraka_{n}$ so that $f'\in \mathcal{T}$ and we have
$f-f'\in I$.
If $g'\neq 0$, then  $\deg g'<\deg f=n$.
By the induction hypothesis, there exists $g''\in\mathcal{T}$ with $\deg g''\leq \deg g'$ such that
$g'-g''\in I$.
If we take
$f'=b_{n}'\fraka_{n}+g''$, then $f-f'\in I$.
Since $\deg g''\leq \deg g'$, we have $\deg g''<\deg f=n$,
so that $f'$\,¡¡is an element of $\mathcal{T}$.

Suppose that there is another element $h'$ in $\mathcal{T}$ such that $h'\neq f'$ and $f-h'\in I$.
Then $f'-h'\in I$. Let $f'=\sum\limits_{i=0}^kc_i\fraka_i$, $h'=\sum\limits_{i=0}^mb_i\fraka_i$,
where $b_i,c_i \in T_{r}$ if $s_r\leq i<s_{r+1}$ for some
$r\in \{0\}\cup[N_I]$. By symmetry, we may assume that $k\leq m$ and put $c_i=0$ for all $i$ with $k< i\leq m$.
Thus $h'-f'=\sum\limits_{i=0}^m(b_i-c_i)\fraka_i$.
Since $h'\neq f'$, there is a nonnegative integer $p$ with $0\leq p\leq m$ such that $b_p-c_p\neq0$ and ${\rm in}(h'-f')=(b_p-c_p)\fraka_p$.
Let $t\in \{0\}\cup[N_I]$ be such that $s_t\leq p<s_{t+1}$. Then $b_p\in T_t$, $c_p\in T_t$ and $b_p-c_p\in \Omega_{s_t}$.
But  $T_t$ is a complete set of representatives of $\bfk$ modulo $\Omega_{s_t}$, so $b_p=c_p$, a contradiction.

It remains to show that every element of $\mathcal{T}$ represents an element of $\sha_{\bfk}(\bfk)/I$.
For any $f'$ in $\mathcal{T}$ we take $f=f'$ in $\sha_{\bfk}(\bfk)$,
then $f-f'\in I$ whence $f'\in \mathcal{T}$ is corresponding to the element $f+I$ in $\sha_{\bfk}(\bfk)/I$. Thus, the underlying set of $\sha_{\mathbf{k}}(\mathbf{k})/I$ is $\mathcal{T}$ and the statement follows at once.
\end{proof}

We remark that the underlying $\bfk$-module $\mathcal{T}$ in Eq.~\eqref{decompzzash2} is usually not the direct sum of these $\bfk$-modules $({\mathbf{k}}/\Omega_{s_{j}}) \fraka_{i}$ for a nonhomogeneous Rota-Baxter ideal $I$, even though we write it in the form of a direct sum. The scalar multiplication by $\bfk$ should be defined according to the $\bfk$-module
$\sha_{\bfk}(\bfk)/I$. We illustrate this by the following example.

\begin{exam}\label{examplesetmod}
{\em Let $\sha_{\ZZ}(\ZZ)$ be the free Rota-Baxter $\ZZ$-algebra on $\ZZ$ of weight $1$. Let $I$ be the Rota-Baxter ideals of $\sha_{\ZZ}(\ZZ)$ generated by the element $f=2(\fraka_{1}+\fraka_{0})$. Then $\sha_{\ZZ}(\ZZ)/I$ and $\sha_\ZZ(\ZZ)/{\rm{in}}(I)$ are in bijection as sets, but are not isomorphic as $\ZZ$-modules (that is, abelian groups).
}
\end{exam}

\begin{proof}
Since ${\rm{in}}(f)=2\fraka_1$, we have $2a_{m+1}=P^{m}(2\fraka_1)\in {\rm{in}}(I)$ for
 $m\geq0$ whence ${\rm{in}}(I)\supseteq\bigoplus\limits_{i\geq1}2\ZZ\fraka_i$.

We note from Lemma \ref{rbigensetlem} that all elements of $I$ can be obtained from iterated operations on $f$ by
the Rota-Baxter operator $P$, the scalar product, the  multiplication and the addition of the algebra $\sha_{\ZZ}(\ZZ)$.
On one hand, we have
$P^m(f)=2(\fraka_{m+1}+\fraka_{m})$. On the other hand, it follows from Eq.~\eqref{productformula1} that
\begin{align*}
\fraka_{m}f=2(m+1)(\fraka_{m+1}+\fraka_{m}).
\end{align*}
Thus, any element of $I$ must be of the form
\begin{align}\label{form1I2}
2c_1(\fraka_{1}+\fraka_{0})+2c_2(\fraka_{2}+\fraka_{1})+\cdots
+2c_{n+1}(\fraka_{n+1}+\fraka_{n}),
\end{align}
where $n\in\NN$, $c_i\in\ZZ$, $1\leq i\leq n+1$. Consequently, ${\rm{in}}(I)\subseteq\bigoplus\limits_{i\geq1}2\ZZ\fraka_i$
and hence ${\rm{in}}(I)=\bigoplus\limits_{i\geq1}2\ZZ\fraka_i$.

Clearly, $A({\rm in}(I))=\{(1, 2\ZZ)\}$, whence both the underlying sets of $\sha_{\ZZ}(\ZZ)/{\rm in}(I)$ and $\sha_{\ZZ}(\ZZ)/I$ are
$$
\ZZ\fraka_0\bigoplus \left(\bigoplus\limits_{i\geq1}\ZZ_2\fraka_i\right)
$$
by Theorems~\ref{corhomoideal} and~\ref{mainthmdcomq}.

For the $\ZZ$-module $\sha_{\ZZ}(\ZZ)/{\rm in}(I)$, it follows from $2\fraka_1\in {\rm in}(I)$ that $\bar{2\fraka_1}=0$. However,
for the $\ZZ$-module $\sha_{\ZZ}(\ZZ)/I$, we have $2\fraka_1+I=-2\fraka_0+I$ so that $\bar{2\fraka_1}=\bar{-2\fraka_0}$, which is clearly not zero since
$-2\fraka_0$ is not in $I$ by Eq.~\eqref{form1I2}.
Therefore,  the Rota-Baxter ideals $I$ and ${\rm in}(I)$ share the same quotient sets, but not $\ZZ$-modules. From the fact that
$\bar{2\fraka_1}\neq 0$ in $\sha_{\ZZ}(\ZZ)/I$ we also see that $\sha_{\ZZ}(\ZZ)/I$ is not the direct sum of the $\ZZ$-modules
$\ZZ\fraka_0$ and $\ZZ_2\fraka_i$, $i\geq1$.
\end{proof}

The following example shows that a similar phenomenon can already be found in the polynomial algebra $\bfk[x]$.

\begin{exam}
Let $I$ be the ideal of $\ZZ[x]$ generated by the polynomial $2x+2$. Then the initial ideal ${\rm in}_{\leq}(I)$ is generated by $2x$. Thus the ideal $I$ and its initial ideal ${\rm in}(I)$ have the same quotient set
$$
\ZZ\bigoplus\left(\bigoplus_{n\geq1}\ZZ_2x^n\right).
$$
However, the two ideals do not have isomorphic quotient groups. For instance, $2x+I=-2+I$ is not zero since $-2$ is not in $I$, but $2x+{\rm in}_{\leq}(I)=0$ since
$2x\in {\rm in}_{\leq}(I)$. Furthermore,  $2x+I\neq 0$ shows that $\ZZ[x]/I$ is not the direct sum of the abelian groups $\ZZ$ and $\ZZ_2x^n$, $n\geq1$.
\end{exam}

\section{Characteristics of Rota-Baxter rings}\label{k=z3}

We next focus on the case when $\bfk=\ZZ$ and classify the Rota-Baxter ideals and prime Rota-Baxter ideal of $\sha_{\mathbb{Z}}(\mathbb{Z})$. Note that $\sha_{\mathbb{Z}}(\ZZ)$ is the initial object in the category of unitary Rota-Baxter rings (that is, Rota-Baxter $\ZZ$-algebras). So we are talking about characteristics of Rota-Baxter rings.

\begin{theorem}\label{mainthmdcom2}
Let $I$ be a Rota-Baxter ideal of $\sha_{\mathbb{Z}}(\mathbb{Z})$.
Then there exist a positive integer $m$ and $m$ pairs $(s_1,\omega_1)$, $\cdots$, $(s_m,\omega_m)\in \NN\times \PP$ with $s_j<s_{j+1}$, $\omega_{j+1}\neq \omega_{j}$ and $\omega_{j+1}|\omega_{j}$, $1\leq j\leq m-1$, such that $A(I)=\left\{(s_1,\omega_1\ZZ),\cdots,(s_m,\omega_m\ZZ)\right\}$.
Thus if $I$ is homogeneous then
$$
I=\bigoplus_{j=1}^m\left(\bigoplus_{i=s_j}^{s_{j+1}-1} \omega_{s_j}\ZZ\fraka_{i}\right)
$$
and the quotient  $\sha_{\mathbb{Z}}(\mathbb{Z})/I$ is isomorphic to
\begin{equation}\label{decompzzash1pz}
\bigoplus\limits_{j=0}^m
\left(\bigoplus\limits_{i=s_{j}}^{s_{j+1}-1}\mathbb{Z}_{\omega_j} \fraka_{i}\right),
\end{equation}
with the convention that $s_0=0$, $\omega_{s_0}=0$ and $s_{m+1}=\infty$. If $I$ is nonhomogeneous then $\sha_{\mathbb{Z}}(\mathbb{Z})/I$ has the same underlying set as $\sha_{\bfk}(\bfk)/{\rm in}(I)$ in Eq.~\eqref{decompzzash1pz}.

Conversely, for any positive integer $m$  and $m$ pairs $(s_1,\omega_1)$, $\cdots$, $(s_m,\omega_m)\in \NN\times \PP$ with $s_j<s_{j+1}$, $\omega_{j+1}\neq \omega_{j}$ and $\omega_{j+1}|\omega_{j}$, $1\leq j\leq m-1$,
there is a Rota-Baxter ideal $I$ of $\sha_{\ZZ}(\ZZ)$ such that the underlying set of $\sha_{\ZZ}(\ZZ)/I$ is the construction defined by  Eq.~\eqref{decompzzash1pz}.
\end{theorem}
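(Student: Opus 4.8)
The plan is to derive the theorem from the general classification results of Section~\ref{Rbi2section3} by exploiting the two special features of $\ZZ$: every ideal is principal, and $\ZZ$ is Noetherian. For the forward direction, let $I$ be a nonzero Rota-Baxter ideal of $\sha_\ZZ(\ZZ)$ (if $I=0$ the statement is vacuous, with $m=0$). By Lemma~\ref{idealisxi} each $\Omega_j=\Omega_j(I)$ is an ideal of $\ZZ$, hence $\Omega_j=\omega_j\ZZ$ for a unique $\omega_j\in\NN$, and the containment $\Omega_j\subseteq\Omega_{j+1}$ translates into the divisibility $\omega_{j+1}\mid\omega_j$. Since $I\neq 0$, we have $\td(I)<\infty$ and $\omega_j\in\PP$ for all $j\geq\td(I)$. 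Because $\ZZ$ is Noetherian the ascending chain $\Omega_0\subseteq\Omega_1\subseteq\cdots$ stabilizes, so there are only finitely many ascending points $s_1<\cdots<s_m$. Writing $\omega_j$ for the positive generator of the ascending level $\Omega_{s_j}$, the definition of ascending point together with Eq.~\eqref{idealasscend} gives $\omega_j\ZZ\subsetneq\omega_{j+1}\ZZ$, i.e. $\omega_{j+1}\mid\omega_j$ and $\omega_{j+1}\neq\omega_j$, so $A(I)=\{(s_1,\omega_1\ZZ),\cdots,(s_m,\omega_m\ZZ)\}$ in the asserted shape.

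For the homogeneous and general quotient statements, when $I$ is homogeneous the displayed formulas for $I$ and for $\sha_\ZZ(\ZZ)/I$ are exactly Theorem~\ref{corhomoideal}\eqref{it:homo1} and \eqref{it:homo2} read with $\bfk=\ZZ$, $\Omega_{s_j}=\omega_j\ZZ$, and $\ZZ/\omega_j\ZZ\cong\ZZ_{\omega_j}$; here $N_I=m<\infty$, so only the finite branch occurs, with the stated conventions $s_0=0$, $\omega_{s_0}=0$, $s_{m+1}=\infty$. For nonhomogeneous $I$ I would quote Theorem~\ref{mainthmdcomq} directly, choosing the complete set of representatives $T_j=\{0,1,\cdots,\omega_j-1\}$ for $j\in[m]$ and $T_0=\ZZ$; this gives that $\sha_\ZZ(\ZZ)/I$ and $\sha_\ZZ(\ZZ)/{\rm in}(I)$ have the common underlying set \eqref{decompzzash1pz}.

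For the converse, given $m\geq 1$ and pairs $(s_j,\omega_j)\in\NN\times\PP$ with $s_j<s_{j+1}$, $\omega_{j+1}\neq\omega_j$ and $\omega_{j+1}\mid\omega_j$, I would first check that $\{(s_j,\omega_j\ZZ)\}_{j\in[m]}$ lies in $\cala$ of Eq.~\eqref{eq:ascpair}: from $\omega_j,\omega_{j+1}\in\PP$, $\omega_{j+1}\mid\omega_j$ and $\omega_{j+1}\neq\omega_j$ one gets the strict inclusion $\omega_j\ZZ\subsetneq\omega_{j+1}\ZZ$, while $s_j<s_{j+1}$ is assumed. Then Theorem~\ref{corhomoideal}\eqref{it:homo3} produces a (unique homogeneous) Rota-Baxter ideal $I$ with $A(I)=\{(s_j,\omega_j\ZZ)\}_{j\in[m]}$, and Theorem~\ref{corhomoideal}\eqref{it:homo2} identifies $\sha_\ZZ(\ZZ)/I$ with \eqref{decompzzash1pz} as a set.

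There is no genuine difficulty here beyond correctly assembling Theorems~\ref{corhomoideal} and~\ref{mainthmdcomq}. The one point requiring care is the finiteness $m<\infty$, which is precisely where Noetherianity of $\ZZ$ enters, together with the bookkeeping that matches the inclusions $\Omega_j\subseteq\Omega_{j+1}$ to the reversed divisibilities $\omega_{j+1}\mid\omega_j$; I would also flag the degenerate case $I=0$ explicitly so that the asserted $m\geq 1$ is unambiguous.
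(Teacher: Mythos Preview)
Your proposal is correct and follows essentially the same approach as the paper: both arguments specialize the general results of Section~\ref{Rbi2section3} to $\bfk=\ZZ$ by invoking that $\ZZ$ is a PID (so each $\Omega_{s_j}$ is principal) and Noetherian (so $N_I=m$ is finite), and then quote Theorems~\ref{corhomoideal} and~\ref{mainthmdcomq} for the structure of $I$ and of the quotient. Your citation of Theorem~\ref{corhomoideal}\eqref{it:homo3} for the converse is in fact a bit more precise than the paper's, which cites Theorem~\ref{mainthmdcomq} there even though the construction of $I$ from the ascending data really comes from~\ref{corhomoideal}\eqref{it:homo3}.
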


\begin{proof}
Let $I$ be a Rota-Baxter ideal of $\sha_{\mathbb{Z}}(\mathbb{Z})$.
Since $\ZZ$ satisfies the ascending chain condition on ideals, $I$ has only finite ascend steps, say $s_1,\cdots,s_m$, where $m$ is a positive integer.
Since $\ZZ$ is a PID, we have $\Omega_{s_j}=\omega_j\ZZ$ for some positive integer $\omega_j$, $j=1,\cdots,m$.
Then, by Lemma \ref{idealisxi}, we have  $\omega_{j+1}\neq \omega_{j}$ and $\omega_{j+1}|\omega_{j}$, $1\leq j\leq m-1$. It follows from
the definition of $A(I)$ that $A(I)=\left\{(s_1,\omega_1\ZZ),\cdots,(s_m,\omega_m\ZZ)\right\}$. Consequently, in view of Theorems~\ref{corhomoideal} and~\ref{mainthmdcomq},
the first part of the theorem follows.

Conversely, take $\Omega_{s_j}=\omega_j\ZZ$. Then it follows from $s_j<s_{j+1}$, $\omega_{j+1}\neq \omega_{j}$ and $\omega_{j+1}|\omega_{j}$, $1\leq j\leq m-1$
that $\Omega_{s_1}\subsetneq\Omega_{s_2}\subsetneq\cdots\subsetneq\Omega_{s_m}$.
So by Theorem \ref{mainthmdcomq}, there is a Rota-Baxter ideal $I$ of $\sha_{\ZZ}(\ZZ)$ such that the underlying set of $\sha_{\ZZ}(\ZZ)/I$ is the construction defined by  Eq.~\eqref{decompzzash1pz}.
\end{proof}

Let $\bfk$ be an integral domain with characteristic $0$.
The next lemma tells us that if $\Omega_t=\bfk\omega_t$ is a principal ideal for $t=\td(I)$, then the element $f\in I$ with ${\rm in}(f)=\omega_t\fraka_{t}$ is completely determined by $\omega_t$ and the number of nonzero terms in $f$.

\begin{prop}\label{idealconskk}
Let $\bfk$ be an integral domain with characteristic $0$ and $\sha_{\bfk}(\bfk)$ the free Rota-Baxter algebra of weight $\lambda$, and let $I\subseteq \sha_{\bfk}(\bfk)$ be a nonzero Rota-Baxter ideal. Suppose that $\td(I)=t$ and
$\Omega_t=\bfk \omega_t$.
\begin{enumerate}
\item\label{l1a1f1} If $f=\sum\limits_{i=r}^{t}c_i\fraka_{i}\in I$ with $c_t=\omega_t$ and $c_r\neq0$, then
\begin{align*}
c_i=\binom{t-r}{t-i}\lambda^{t-i}c_t, \qquad r\leq i\leq  t-1.
\end{align*}

\item\label{hom1f2} If $\lambda=0$, then $I\subseteq \bigoplus\limits_{i\geq t}\bfk\fraka_{i}$.

\item\label{hom1f3} If $\lambda=0$ and $\bfk$ is a field, then $I=\bigoplus\limits_{i\geq t}\bfk\fraka_{i}$.
\end{enumerate}
\end{prop}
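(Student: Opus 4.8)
For part~\eqref{l1a1f1}, the plan is to exploit the Rota-Baxter structure of $I$ together with the minimality of $t=\td(I)$. Given $f=\sum_{i=r}^t c_i\fraka_i\in I$ with $c_t=\omega_t$ and $c_r\neq 0$, I would apply the operator $P$ and multiply by the basis elements $\fraka_n$ to produce new elements of $I$ whose initial terms land in degree $t$; since $\Omega_t=\bfk\omega_t$ is principal and $\td(I)=t$, the coefficient structure is rigidly constrained. Concretely, first I would establish the claim when $r=t-1$ by a direct computation: compute $\fraka_1\diamond f$ (or $P(f)-\,$something) and use that any element of $I$ of degree $\le t-1$ must vanish, which forces a relation pinning down $c_{t-1}$ in terms of $c_t$. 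For the general case the cleanest route is likely downward induction on $i$ from $t-1$ to $r$: assuming the formula $c_j=\binom{t-r}{t-j}\lambda^{t-j}c_t$ holds for $j>i$, I would construct from $f$ an auxiliary element of $I$ (by subtracting suitable $\bfk$-multiples of $P^{k}(g)$ for lower-degree generators $g$, or by multiplying $f$ by $\fraka_m$ and using Eq.~\eqref{productformula1}) whose degree is forced below $t$, hence is zero, and read off $c_i$. The binomial coefficient $\binom{t-r}{t-i}$ strongly suggests the mechanism is iterated application of $P$ to a single ``seed'' relation, since $P^k(\fraka_m)=\fraka_{m+k}$ and the binomials accumulate through the product formula; alternatively one recognises that $f$ must be proportional to $P^{\,?}$ applied to $(\fraka_{t-r}+\lambda\,(\text{lower}))^{\diamond}$, i.e.\ to an explicit ``divided-power-like'' element whose coefficients are exactly these binomials. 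I would verify the formula by checking it is consistent under $P$ and under the requirement $\deg(\text{residue})<t$; integrality of $\bfk$ with characteristic $0$ is used to cancel the nonzero integer factors $\binom{m+p-i}{p}\binom{p}{i}$ that appear in Eq.~\eqref{productformula1}.

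For part~\eqref{hom1f2}, assume $\lambda=0$. By Lemma~\ref{generatingset2.3}, $I$ has an ascent generating set $\{f_{s_j,\ell}\}$, and by part~\eqref{l1a1f1} with $\lambda=0$ every element $f\in I$ with $c_t=\omega_t$ satisfies $c_i=0$ for $r\le i\le t-1$, i.e.\ $f\in\bigoplus_{i\ge t}\bfk\fraka_i$; more generally scaling shows every $f\in I$ with $\deg f = t$ lies in $\bfk\fraka_t$. The point is then that when $\lambda=0$ the product \eqref{productformula1} collapses to $\fraka_m\diamond\fraka_n=\binom{m+n}{m}\fraka_{m+n}$ and $P(\fraka_m)=\fraka_{m+1}$ are both degree-raising, so applying Lemma~\ref{rbigensetlem} to the generating set shows that $S_{RB}$ consists of $\bfk$-multiples of single basis elements $\fraka_i$ with $i\ge t$, hence the ideal they generate sits inside $\bigoplus_{i\ge t}\bfk\fraka_i$. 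I expect the smoothest argument is: induct on $\deg f$ for $f\in I$; the top term $b\fraka_{\deg f}$ lies in $\Omega_{\deg f}\fraka_{\deg f}\subseteq\bigoplus_{i\ge t}\bfk\fraka_i$ (using $\deg f\ge\td(I)=t$), subtract a suitable $P$-iterate of a degree-$t$ generator to kill it, and apply the inductive hypothesis to the remainder, which has strictly smaller degree but is still in $I$.

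For part~\eqref{hom1f3}, assume additionally that $\bfk$ is a field and $\lambda=0$. Then $\Omega_t=\bfk\omega_t=\bfk$ since $\omega_t\neq 0$, so $\fraka_t\in I$ (there is an element of $I$ with initial term $\omega_t\fraka_t$, hence by part~\eqref{hom1f2} a scalar multiple equal to $\fraka_t$, or directly: the degree-$t$ piece of $I$ is $\Omega_t\fraka_t=\bfk\fraka_t$). Since $I$ is a Rota-Baxter ideal, $\fraka_{i}=P^{\,i-t}(\fraka_t)\in I$ for all $i\ge t$, giving $\bigoplus_{i\ge t}\bfk\fraka_i\subseteq I$; combined with the reverse inclusion from part~\eqref{hom1f2} this yields $I=\bigoplus_{i\ge t}\bfk\fraka_i$. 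The main obstacle in the whole proposition is part~\eqref{l1a1f1}: identifying exactly which element-of-$I$ to subtract so that the forced vanishing of a sub-$\td(I)$-degree element delivers the binomial coefficients cleanly, and making sure the cancellations are legitimate over an arbitrary characteristic-zero integral domain (this is where $\rchar\bfk=0$ and the absence of zero divisors are essential, to divide out the positive-integer binomial factors coming from \eqref{productformula1}); parts~\eqref{hom1f2} and~\eqref{hom1f3} should then be short consequences.
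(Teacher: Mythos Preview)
Your plan for part~\eqref{hom1f3} is correct and matches the paper. For parts~\eqref{l1a1f1} and~\eqref{hom1f2} there are genuine gaps.

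In part~\eqref{l1a1f1}, a downward induction on $i$ cannot get started: the very first unknown, $c_{t-1}=(t-r)\lambda c_t$, already depends on $r$, so no manipulation looking only at the top of $f$ will determine it. The paper makes one construction that captures this global dependence. Set $g_1:=(t+1)P(f)-\fraka_1 f\in I$ (degree $\le t$); its $\fraka_t$-coefficient $c_{t-1}-t\lambda c_t$ lies in $\Omega_t=\bfk c_t$, so $c_{t-1}=bc_t$ for some $b\in\bfk$, and then $g_2:=g_1-(b-t\lambda)f$ has degree $<t$, forcing $g_2=0$. This single identity yields simultaneously the recursion $(t{+}1{-}i)c_{i-1}=(b-\lambda(t{-}i))c_i$ for all $i$ \emph{and} the bottom equation $(\lambda(t{-}r)-b)c_r=0$. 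Only the latter, together with $c_r\neq 0$ and the integral-domain hypothesis, pins down $b=\lambda(t-r)$; the recursion then telescopes (cancelling nonzero integers, using $\rchar\bfk=0$) to the binomial formula. Your sketch never isolates the parameter $b$ nor the role of the hypothesis $c_r\neq 0$.

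In part~\eqref{hom1f2}, both routes you outline break at the same place. The ascent-generating-set approach needs the generators $f_{s_j,\ell}$ at \emph{every} level $s_j$ to be homogeneous before Lemma~\ref{rbigensetlem} applies, but part~\eqref{l1a1f1} only controls the level $s_1=t$. The induction approach fails because the leading coefficient $b\in\Omega_{\deg f}$ need not lie in $\bfk\omega_t$ (the chain $\Omega_t\subsetneq\Omega_{s_2}\subsetneq\cdots$ may be strict), so a $P$-iterate of $\omega_t\fraka_t$ cannot kill it. The paper's remedy is to multiply by $\omega_t$ first: write $f=g+f_0$ with $g\in\bigoplus_{i\ge t}\bfk\fraka_i$ and $\deg f_0<t$; since $\omega_t\fraka_i=P^{\,i-t}(\omega_t\fraka_t)\in I$ for all $i\ge t$ (from part~\eqref{l1a1f1} with $\lambda=0$), stripping these from $\omega_t f\in I$ leaves $\omega_t f_0\in I$ of degree $<t=\td(I)$, hence $\omega_t f_0=0$, hence $f_0=0$ since $\bfk$ is a domain.
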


\begin{proof}
\eqref{l1a1f1} Since $I$ is a Rota-Baxter ideal and $f=\sum\limits_{i=r}^{t}c_i\fraka_{i}$ is in $I$, we have $g_1:=(t+1)P(f)-\fraka_{1}f\in I$.
Now from Eq.~\eqref{productformula1} it follows that
\begin{align*}
\fraka_{1}f
=(t+1)c_t\fraka_{t+1}+ \sum\limits_{i=r+1}^{t}i(c_{i-1}+\lambda c_{i})\fraka_{i}
+\lambda r c_{r}\fraka_{r},
\end{align*}
which together with
\begin{align*}
(t+1)P(f)=(t+1)\sum\limits_{i=r}^{t}c_{i}\fraka_{i+1}
=(t+1)\sum\limits_{i=r+1}^{t+1}c_{i-1}\fraka_{i}
\end{align*}
implies that
\begin{align*}
g_1=\sum\limits_{i=r+1}^{t}\left((t+1-i)c_{i-1}-\lambda ic_{i}\right)\fraka_{i}-\lambda rc_{r}\fraka_{r}.
\end{align*}
Thus, the coefficient of $\fraka_{t}$ is in $\Omega_t$, that is, $c_{t-1}-\lambda tc_{t}$ is in $\bfk c_t$ so that
$c_{t-1}$ is in $\bfk c_t$. Hence there exists $b\in\bfk$ such that $c_{t-1}=bc_t$.
Then
\begin{align*}
g_2:=&g_1-(b-\lambda t)f=\sum\limits_{i=r+1}^{t}\left((t+1-i)c_{i-1}+(\lambda (t-i)-b)c_i\right)\fraka_{i}+(\lambda (t-r)-b)c_r\fraka_{r}
\end{align*}
is in $I$.
But the coefficient of $\fraka_{t}$ in $g_2$ is $0$, so the fact that $\td(I)=t$ yields that
$g_2=0$, which is equivalent to $(\lambda (t-r)-b)c_r=0$  and
\begin{align}\label{eqsf}
(t+1-i)c_{i-1}+(\lambda (t-i)-b)c_i=0,\qquad r+1\leq i\leq t.
\end{align}
By the hypotheses that $\bfk$ is an integral domain and $c_r\neq0$, it follows from $(\lambda (t-r)-b)c_r=0$ that $b=\lambda (t-r)$.
Consequently, Eq. \eqref{eqsf} becomes
\begin{align*}
(t+1-i)c_{i-1}=\lambda(i-r)c_i,\qquad r+1\leq i\leq t.
\end{align*}
So for each $r\leq i\leq t-1$, we have
\begin{align*}
\prod_{j=i+1}^t\left((t+1-j)c_{j-1}\right)=\prod_{j=i+1}^t\left(\lambda(j-r)c_j\right),
\end{align*}
which is equivalent to
\begin{align*}
(t-i)!\prod_{j=i+1}^tc_{j-1}=\lambda^{t-i}\frac{(t-r)!}{(i-r)!}\prod_{j=i+1}^tc_j.
\end{align*}
Since $\bfk$ is  an integral domain with characteristic $0$, we obtain
\begin{align*}
c_i=\lambda^{t-i}\frac{(t-r)!}{(t-i)!(i-r)!}c_t=\binom{t-r}{t-i}\lambda^{t-i}c_t,\qquad r\leq i\leq t-1.
\end{align*}
This completes the proof of part \eqref{l1a1f1} of this lemma.
\smallskip

\noindent
\eqref{hom1f2} Take arbitrary $f\in I$.
Since $\td(I)=t$, we may assume that
\begin{align*}
f=b_m\fraka_{m}+b_{m-1}\fraka_{m-1}+\cdots+b_t\fraka_{t}+f_0\in I,
\end{align*}
where $m\geq t$ and  $\deg\ f_0\leq t-1$.
Note that $\Omega_t=\bfk\omega_t$. So there exists an element $g$ in $I$ with $\omega_t\fraka_t$ as the initial term.
Since $\lambda=0$, by part \eqref{l1a1f1}, we must have that $g=\omega_t\fraka_{t}$ is in $I$. So
$\omega_t\fraka_{m}=P^{m-t}(\omega_t\fraka_{t})$ is in $I.$
Then $h:=\omega_tf-b_m\omega_t\fraka_{m}$ is in $I$, that is,
\begin{align}\label{arfx1}
h=b_{m-1}\omega_t\fraka_{m-1}+\cdots+ b_t\omega_t\fraka_{t}+\omega_tf_0
\end{align}
is in $I$. Since $P(I)\subseteq I$, we have $\omega_t\fraka_{\ell}=P^{\ell-t}(\omega_t\fraka_{t})\in I$ for all $t\leq \ell\leq m-1$,
so that $b_{m-1}\omega_t\fraka_{m-1}+\cdots+ b_t\omega_t\fraka_{t}\in I$. Then Eq.~\eqref{arfx1} implies that
$\omega_tf_0\in I$.
Note that $\deg\ f_0\leq t-1$, which together with $\td(I)=t$ yields $\omega_tf_0=0$. But $\bfk$ is an integral domain, so $f_0=0$.
Therefore, $I\subseteq \bigoplus\limits_{i\geq t}\bfk\fraka_{i}$ and we are done.
\smallskip

\noindent
\eqref{hom1f3} Since $\lambda=0$, we know that $\omega_t\fraka_{t}$ is in $I$. But $\bfk$ is a field, so $\Omega_t=\bfk$,
whence $\bigoplus\limits_{i\geq t}\bfk\fraka_{i}\subseteq I$, which together with part \eqref{hom1f2} gives
$I= \bigoplus\limits_{i\geq t}\bfk\fraka_{i}$.
\end{proof}

We finally give a classification for the prime Rota-Baxter ideals of $\sha_{\ZZ}(\ZZ)$ when $\lambda=0$.
Recall that a  Rota-Baxter ideal $I$ of a Rota-Baxter algebra $(R,P)$ is said to be {\bf prime} if it is a prime ideal with $P(I)\subseteq I$.
If $\lambda=0$, then for any $m,n\in \NN$, it follows from Eq. \eqref{productformula1} that
\begin{align}\label{productformula0}
\fraka_{m} \fraka_{n}
=\binom{m+n}{m}\fraka_{m+n}.
\end{align}

\begin{theorem}\label{primesirb}
Let $\sha_{\ZZ}(\ZZ)$ be the free Rota-Baxter $\ZZ$-algebra on $\ZZ$ of weight $0$ and let $I$ be a proper nonzero Rota-Baxter ideal of $\sha_{\ZZ}(\ZZ)$. Then $I$ is prime if and only if
$$I=p\ZZ\fraka_{0} \bigoplus \left(\bigoplus\limits_{i\geq 1}\ZZ\fraka_{i}\right),$$
where $p$ is either $0$ or a prime number of $\ZZ$.
\end{theorem}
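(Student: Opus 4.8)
The plan is to prove both implications; the ``only if'' direction is where essentially all the content sits. For ``if'', suppose $I=p\ZZ\fraka_0\oplus\bigoplus_{i\geq1}\ZZ\fraka_i$ with $p$ either $0$ or prime. Since $\fraka_0$ is the identity of $\sha_\ZZ(\ZZ)$, Eq.~\eqref{productformula0} shows the $\ZZ$-linear map $\pi:\sha_\ZZ(\ZZ)\to\ZZ/p\ZZ$ with $\pi(\fraka_0)=1$ and $\pi(\fraka_i)=0$ for $i\geq1$ is a surjective ring homomorphism with kernel $I$; it also kills every $\fraka_{i+1}=P(\fraka_i)$, so $I$ is a Rota-Baxter ideal and $\sha_\ZZ(\ZZ)/I\cong\ZZ/p\ZZ$. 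As $\ZZ/p\ZZ$ is an integral domain exactly when $p$ is $0$ or prime, $I$ is then a proper nonzero prime Rota-Baxter ideal.

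For ``only if'', let $I$ be a proper nonzero prime Rota-Baxter ideal, so $\sha_\ZZ(\ZZ)/I$ is an integral domain, and set $t=\td(I)$. The computational inputs are the weight-$0$ identities $\fraka_1\fraka_{n-1}=n\,\fraka_n$ and $\fraka_1^{\,n}=n!\,\fraka_n$ from Eq.~\eqref{productformula0}. Suppose first $t\geq1$; then $I\subseteq\bigoplus_{i\geq t}\ZZ\fraka_i$, and writing $\Omega_t(I)=\omega_t\ZZ$ with $\omega_t\in\PP$ ($\ZZ$ being a PID), Proposition~\ref{idealconskk}\eqref{l1a1f1} with $\lambda=0$ forces the element of $I$ with initial term $\omega_t\fraka_t$ to have no lower-degree terms, i.e. $\omega_t\fraka_t\in I$. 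Then $\omega_t\fraka_t=(\omega_t\fraka_0)\fraka_t\in I$ with $\omega_t\fraka_0\notin I$ (nonzero of degree $0<t$), so primeness gives $\fraka_t\in I$; and if $t\geq2$ then $t\,\fraka_t=\fraka_1\fraka_{t-1}\in I$ with neither $\fraka_1$ nor $\fraka_{t-1}$ in $I$ (both nonzero of degree $<t$), a contradiction. Hence $t=1$ and $\fraka_1\in I$, so $\fraka_i=P^{\,i-1}(\fraka_1)\in I$ for all $i\geq1$ and $I=\bigoplus_{i\geq1}\ZZ\fraka_i$, the case $p=0$.

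Now suppose $t=0$. Then $\Omega_0(I)\neq0$, and since $\ZZ\cong\ZZ\fraka_0$ maps to $\sha_\ZZ(\ZZ)/I$ with kernel $\Omega_0(I)$, the domain $\sha_\ZZ(\ZZ)/I$ contains the subring $\ZZ/\Omega_0(I)$, forcing $\Omega_0(I)=p\ZZ$ with $p$ prime. Thus $p\fraka_0\in I$, hence $p\fraka_i=P^{\,i}(p\fraka_0)\in I$ for all $i\geq0$; since $p\mid p!$ we get $\fraka_1^{\,p}=p!\,\fraka_p=(p-1)!\,(p\fraka_p)\in I$, so primeness gives $\fraka_1\in I$ and then $\fraka_i\in I$ for all $i\geq1$. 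Consequently $I\supseteq p\ZZ\fraka_0\oplus\bigoplus_{i\geq1}\ZZ\fraka_i$, which has quotient $\FF_p$ and is therefore maximal; since $I$ is proper, equality holds, completing the classification.

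The point I expect to require the most care is the $t\geq1$ analysis: the only element of $I$ available at the start is $\omega_t\fraka_t$ (resp.\ $p\fraka_0$ when $t=0$), and every step extracts information from it by a factorization, so one must consistently verify that the cofactors produced --- $\fraka_0$, $\fraka_1$, or $\fraka_{t-1}$ --- have degree strictly below $\td(I)$ and hence cannot lie in $I$, which is what forces the ``large'' factor (and eventually every $\fraka_i$, $i\geq1$) into $I$. Proposition~\ref{idealconskk} and the divisibility $p\mid p!$ are the only non-routine ingredients.
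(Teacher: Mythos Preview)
Your proof is correct and follows essentially the same approach as the paper's: use Proposition~\ref{idealconskk} to put a monomial $\omega_t\fraka_t$ into $I$, then exploit primeness together with the weight-$0$ product formula to force $\fraka_1\in I$, and handle the $t=0$ case via $\fraka_1^{\,p}=p!\,\fraka_p$. The only cosmetic differences are that the paper reaches $\fraka_1\in I$ in one step via the factorization $\fraka_1\cdot(\omega\fraka_{t-1})=t\omega\fraka_t$ (rather than your two-step route through $\fraka_t\in I$), and that your unstated inclusion $I\subseteq\bigoplus_{i\geq t}\ZZ\fraka_i$ is exactly Proposition~\ref{idealconskk}\eqref{hom1f2}, which you should cite.
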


By Theorems \ref{mainthmdcom2} and \ref{primesirb}, the quotient of a prime characteristic of a Rota-Baxter ring is either $\ZZ$ or $\ZZ/p\ZZ$ for a prime number $p$, as in the case of prime characteristic of a ring.

\begin{proof}
Let $I$ be a prime Rota-Baxter ideal. Denote $t=\td(I)$ and $\Omega_t=\omega\ZZ$ for some positive integer $\omega$.
Then, by Proposition~\ref{idealconskk}\eqref{l1a1f1}, the element in $I$ of the form
$$\omega\fraka_{t}+ \text{ lower degree terms}$$
must be
$\omega\fraka_{t}$, that is, $\omega\fraka_{t}\in I$.

If $t\geq 1$, then $\omega \fraka_{t-1}\not\in I$ since $\td(I)=t$. From Eq.~\eqref{productformula0} we know that $\fraka_{1}(\omega\fraka_{t-1})=t \omega\fraka_{t}\in I$.
Since $I$ is prime, we have $\fraka_{1}\in I$, which means that $t={\rm st}(I)\leq1$ and hence $t=1$. Now $\fraka_{1}\in I$ gives $1\in \Omega_1$ so that
$\Omega_1=\ZZ$. By Lemma \ref{idealisxi}, $\Omega_j=\ZZ$ for all positive integer $j$. Therefore, $$I=\bigoplus\limits_{i\geq 1}\ZZ\fraka_{i}.$$

If $t=0$, then $\omega \fraka_{0}\in I$.
Since $I$ is a prime ideal and $\fraka_0$ is the identity of $\sha_{\ZZ}(\ZZ)$, $\omega$ must be a prime number. Let $\omega =p$.
Then $p\geq2$ and $p\fraka_{0}\in I$.
So
$\fraka_1^p=p! \fraka_p=(p-1)!P^p(p\fraka_0)\in I$ which means that $\fraka_1$ is in the prime ideal $I$.
Hence $I$ is generated by $p\fraka_{0}$ and $\fraka_{1}$.
Therefore, $$I=p\ZZ\fraka_{0} \bigoplus \left(\bigoplus\limits_{i\geq 1}\ZZ\fraka_{i}\right).$$

Conversely, $I$ is a homogeneous Rota-Baxter ideal generated by either $\{\fraka_1\}$ or $\{p\fraka_0,\fraka_1\}$,
where $p$ is a prime number. By Theorem \ref{mainthmdcom2}, if $I$ is generated by $\left\{\fraka_{1}\right\}$, then  $\sha_{\ZZ}(\ZZ)/I$ is isomorphic to $\ZZ$. If $I$ is generated by  $\left\{p\fraka_{0},\fraka_{1}\right\}$, then $\sha_{\ZZ}(\ZZ)/I$ is isomorphic to $\ZZ_p$.
In either case, $I$ is a prime ideal.
\end{proof}

\noindent {\bf Acknowledgements}:
This work was supported by NSFC grant $11426183$, $11501467$,
Chongqing Research Program of Application Foundation and Advanced Technology $($No. cstc2014jcyjA00028$)$.
H. Yu would like to thank the  NYU Polytechnic School of Engineering
for hospitality and support.

\end{document}